%------------------------------------------------------------------------------
% Beginning of journal.tex
%------------------------------------------------------------------------------
%
% AMS-LaTeX version 2 sample file for journals, based on amsart.cls.
%
%        ***     DO NOT USE THIS FILE AS A STARTER.      ***
%        ***  USE THE JOURNAL-SPECIFIC *.TEMPLATE FILE.  ***
%
% Replace amsart by the documentclass for the target journal, e.g., tran-l.
%
\documentclass[11pt]{amsart}
\usepackage{amsfonts,latexsym,rawfonts,amsmath,amssymb,amsthm}
\usepackage[plainpages=false]{hyperref}  % to run red/green references
\usepackage{verbatim}                    % to run comment
\usepackage{color}

\newtheorem{theorem}{Theorem}[section]
\newtheorem{lemma}[theorem]{Lemma}
\newtheorem{corollary}[theorem]{Corollary}

\theoremstyle{definition}
\newtheorem{definition}[theorem]{Definition}

\theoremstyle{remark}
\newtheorem{remark}[theorem]{Remark}

\numberwithin{equation}{section}

%    Absolute value notation (new definition)

\textwidth =5.5 in
 \textheight = 8.25 in

\begin{document}

\title[%Asymptotic behavior of Monge-Amp\`ere equations in half spaces
]
{Asymptotic behavior at infinity of solutions of Monge-Amp\`ere equations in half spaces}

%    Information for first author
\author[%X. Jia\and D. Li\and Z. Li
]
{Xiaobiao Jia,  Dongsheng Li \and Zhisu Li}
%    Address of record for the research reported here
\address{School of Mathematics and Statistics\\	Xi'an Jiaotong University\\	Xi'an 710049, China.}
% Current address
%  \curraddr{Department of Mathematics, Case Western Reserve University, Cleveland, Ohio 43403}
\email{xiaobiaojia@126.com (Xiaobiao Jia: corresponding author)}
\email{lidsh@mail.xjtu.edu.cn (Dongsheng Li)}
%    \thanks will become a 1st page footnote.
\thanks{The first and the second authors were supported by National Science Foundation of China (Grant No. 11671316) and  the third author was supported by National Science Foundation of China (Grant No. 11801015) and China Postdoctoral Science Foundation (Grant No. 2018M631230).}

%  Information for second author

%\address{School of Mathematics and Statistics\\ Xi'an Jiaotong University\\Xi'an 710049, China.}
%\email{lidsh@mail.xjtu.edu.cn (Dongsheng Li)}

%\thanks{The first author was supported in part by NSFC Grant 11671316.}

\address{Beijing International Center for Mathematical Research, Peking University, Beijing 100871,China.
}
\email{lizhisu@bicmr.pku.edu.cn (Zhisu Li)}
%\thanks{The first author is corresponding author.}

%General info
\subjclass[2010]{35J96, 35B40, 35A01}

%\date{\today}

%\dedicatory{This paper is dedicated to our advisors.}

\keywords{Monge-Amp\`ere equation, asymptotic behavior,  half space, existence theorem}

\begin{abstract}
It is proved that any convex viscosity solution
of $\det D^2u=1$ outside a bounded domain of the half space
is asymptotic to a quadratic polynomial at infinity under reasonable assumptions, where the asymptotic rate is the same as the Poisson kernel of the half space. Consequently, it follows the Liouville type theorem on Monge-Amp\`ere equation in the half space.
Meanwhile, it is established the existence theorem for the Dirichlet problem with prescribed asymptotic behavior at infinity.
\end{abstract}

\maketitle

\section{Introduction}

In this paper we investigate the asymptotic behavior at infinity
of convex viscosity solution of the Monge-Amp\`ere equation
\begin{equation}\label{SZ_Main_TM_Equation}
         \left\{
               \begin{aligned}
                      &\det D^2u=f  \quad\mbox{in } \mathbb{R}^n_+,\\
                      &u=p(x)\quad\quad\;\,\mbox{on }  \{x_n=0\},
                \end{aligned}
         \right.
\end{equation}
where the space dimension $n\geq 2$,
$p(x)$ is a quadratic polynomial with $D^2p>0$ and
%$f\in C^0(\mathbb{R}^n_+)$ satisfies
%\begin{equation}\label{SZ_f_is_Bounded}     %0<\lambda\leq\inf_{\mathbb{R}^n_+} f\leq \sup_{\mathbb{R}^n_+} %f\leq\Lambda<\infty
%\end{equation}and
\begin{equation}\label{SZ_supp_f-1_is_Bounded}
    \Omega_0:=\mbox{support} (f-1)\subset B_{R_0}^+
\end{equation}
for some $R_0>0$.

The classical J$\ddot{\mbox{o}}$gens-Calabi-Pogorelov theorem
(cf. \cite{Jorgens-1954-MathAnn} for $n=2$, \cite{Calabi-1958-MichiganMathJ} for
$n\leq 5$, \cite{Pogoralov-1972-GeometriaeDedicata} for $n\geq2$)
states that any classical convex solution of
\begin{equation*}\label{EQ_det=1}
          \det D^2u=1\quad\mbox{in}\;\mathbb{R}^n
\end{equation*}
is a quadratic polynomial.
In \cite{Caffarelli-1995},
L. A. Caffarelli extended above result to viscosity solutions.
The asymptotic behavior at infinity of viscosity solution of $\det D^2u=1$
outside a bounded subset of $\mathbb{R}^n$ was obtained
by L. A. Caffarelli and Y. Y. Li in \cite{Caffarelli-Liyanyan-2003-CommPureApplMath}, where the
main conclusion is that
for $n\geq3$, $u$ tends to a quadratic polynomial at infinity with rate at least $|x|^{2-n}$;
for $n=2$, $u$ tends to a quadratic polynomial plus $d\log |x|$ at infinity
with rate at least $|x|^{-1}$ for some constant $d$.
When $n=2$, L. Ferrer, A. Mart\'inez and F. Mil\'an obtained the same result
by adopting complex variable methods (cf. \cite{Ferrer-Martinez-Milan-1999-MathZ, Ferrer-Martinez-Milan-2000-MonatshMath}).

The main purpose of this paper is to extend the result in \cite{Caffarelli-Liyanyan-2003-CommPureApplMath} to the half space. Our main result is:
\begin{theorem}\label{TM_Main_TM}
Let $p(x)$ be a quadratic polynomial satisfying $D^2p>0$ and $f\geq0$ satisfy (\ref{SZ_supp_f-1_is_Bounded}).
Assume that u is a convex viscosity solution of (\ref{SZ_Main_TM_Equation}) such that
\begin{equation}\label{SZ_u_is_quadra_increasing_at_infty}
    \mu|x|^2\leq u(x)\leq {\mu}^{-1}|x|^2
     \quad\mbox{in } \mathbb{\overline{R}}^n_+\backslash B_{R_0}^+
\end{equation}
for some $0<\mu\leq\frac{1}{2}$.
Then there exist some symmetric positive definite matrix $A$ with $\det A=1$,
vector $b\in \mathbb{R}^n$ and constant $c\in\mathbb{R}$ such that
\begin{equation}\label{SZ_Main_TM_Asym_Behar_of_u}
   \left|u(x)-\left(\frac{1}{2}x^T Ax+b\cdot x+c\right)\right|\leq C\frac{x_n}{|x|^n}
   \quad \mbox{in }\mathbb{\overline{R}}^n_+\backslash B^+_R,
\end{equation}
where $x=(x',x_n)$, and $C$ and $R\geq R_0$ depend only on $n$, $\mu$ and $R_0$.
Moreover, $u\in C^{\infty} (\overline{\mathbb{R}}^n_+\backslash \Omega_0)$
and for any $k\geq1$,
\begin{equation}\label{SZ_Main_TM_Asym_Behar_of_D^ku}
     |x|^{n-1+k}\left|D^k\left(u(x)-\frac{1}{2}x^TAx-b\cdot x-c\right)\right|\leq C
     \quad \mbox{in }\mathbb{\overline{R}}^n_+\backslash B^+_R,
\end{equation}
where $C$ also depends on $k$.
\end{theorem}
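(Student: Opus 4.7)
The plan is to adapt the Caffarelli--Li \cite{Caffarelli-Liyanyan-2003-CommPureApplMath} blow-down and linearization scheme to the half space, so that the target decay rate $x_n/|x|^n$ in the conclusion appears as the Poisson kernel of $\mathbb{R}^n_+$ in place of the Newton kernel $|x|^{2-n}$ used in the interior case.

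I would first normalize by an affine change of variables preserving both $\{x_n=0\}$ and $\det D^2u$, so that $D^2p$ takes a standard form on the boundary. The heart of the argument is then a blow-down construction of the asymptotic quadratic $Q(x)=\tfrac12 x^T Ax+b\cdot x+c$. Considering rescalings $u_\lambda(x)=\lambda^{-2}u(\lambda x)$, the quadratic growth hypothesis together with Caffarelli-type strict convexity and Pogorelov-type boundary estimates shows that, up to extracting a subsequence, the half-sections $\{u_\lambda<h\}\cap\overline{\mathbb{R}^n_+}$ are equivalent to half-ellipsoids of bounded eccentricity. The limit $u_\infty$ is a convex solution of $\det D^2u_\infty=1$ in $\mathbb{R}^n_+$ whose boundary trace is the leading quadratic part of $p$ and which is sandwiched between $\mu|x|^2$ and $\mu^{-1}|x|^2$. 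A half-space Liouville-type result---proved either via a reflection across $\{x_n=0\}$ that reduces to the classical J\"orgens--Calabi--Pogorelov theorem, or directly via Savin's boundary $C^{2,\alpha}$ estimates---then forces $u_\infty$ to be a quadratic polynomial. Uniqueness of this blow-down limit identifies $A$, with a parallel but finer argument producing $b$ and $c$; the constraint that $u-Q$ vanish on $\{x_n=0\}$ simultaneously pins down the boundary-adapted components of $A$ and $b$.

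With $Q$ in hand, set $v=u-Q$. Then $v\equiv 0$ on $\{x_n=0\}$ and, outside $\Omega_0$, $v$ satisfies the linearized Monge-Amp\`ere equation $U^{ij}\partial_{ij}v=0$, where $U^{ij}$ denotes the cofactor matrix of $D^2u$. Since $D^2u\to A$ at infinity and $\det A=1$, the matrix $U^{ij}$ converges to $A^{-1}$, and after a further linear change reducing $A$ to the identity the operator becomes a compact perturbation of the Laplacian in the half space with zero Dirichlet data on $\{x_n=0\}$. I would then run a Perron-type iteration on dyadic half-annuli $B^+_{2^{k+1}R}\setminus B^+_{2^kR}$, sandwiching $v$ between multiples of the Poisson-kernel barrier $c_\pm\, x_n/|x|^n$ using both the interior Harnack inequality of Caffarelli--Guti\'errez and a boundary Harnack principle for the linearized operator at the flat boundary. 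Iterating the gain per annulus upgrades an a priori slow decay of $v$ to the sharp Poisson rate stated in the theorem. The higher-order estimates are then obtained by differentiating the linearized equation, rescaling on each annulus, and applying interior and flat-boundary Schauder estimates; $C^\infty$-smoothness outside $\Omega_0$ follows by bootstrap.

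The main obstacle is expected to be this decay step. Two intertwined difficulties arise: (i) the coefficients $U^{ij}$ approach $A^{-1}$ only at a rate governed by the decay of $v$ itself, so the argument must be structured as a bootstrap that begins with a mild quantitative stability of the blow-down limit and climbs to the Poisson rate; and (ii) the boundary Harnack inequality for the linearized Monge-Amp\`ere equation along the flat portion $\{x_n=0\}$ is more delicate than its interior counterpart and must be made uniform as one moves to infinity along the boundary, which requires careful tracking of how the quadratic trace $p$ interacts with the blow-down asymptotics of $D^2u$ near $\{x_n=0\}$.
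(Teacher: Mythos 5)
Your overall architecture matches the paper's: normalize the boundary quadratic, extract an asymptotic Hessian $A$ from the shapes of the sections, linearize, and then push the error down to the Poisson rate. But there are two genuine gaps in the plan, and one place where the paper's route is both simpler and sharper than yours.

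\emph{The reflection argument cannot work.} After subtracting the boundary quadratic, odd reflection of $w = u - p$ across $\{x_n=0\}$ replaces $D^2w(x',x_n)$ in the lower half space by $-S\,D^2w(x',-x_n)\,S$ with $S = \mathrm{diag}(1,\dots,1,-1)$; for $p(x',0)=\tfrac12|x'|^2$ this turns the constraint $\det(D^2p + D^2w)=1$ into $\det(D^2p - D^2w)=1$, which is not satisfied, and it destroys convexity of the extended function. (The example $u = \tfrac{x_1^2}{2(x_n+1)} + \tfrac12\sum_{i=2}^{n-1}x_i^2 + \tfrac16(x_n^3+3x_n^2)$ from Remark 1.2 is precisely a solution that has no convex reflection.) More fundamentally, the half-space Liouville theorem (Corollary \ref{Co_f=1}) is derived in the paper \emph{from} Theorem \ref{TM_Main_TM}, so a blow-down argument invoking it must supply an independent proof. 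The paper sidesteps this entirely: it never uses a Liouville theorem. Instead it compares $u$ with solutions $\xi$ of auxiliary Dirichlet problems posed on the sections $S_M(u)$, passes to the upper-triangular factors $T_k$ with $T_k^TT_k = D^2\xi(0)$, and proves the quantitative Cauchy estimate $\|T_k - T_{k-1}\|\le C2^{-\tau k/2}$. The limit $T$ both identifies $A$ \emph{and} delivers the rate $|u(T^{-1}x) - \tfrac12|x|^2| \le C|x|^{2-\tau}$, which is what the linear step actually needs. Your phrase ``uniqueness of this blow-down limit'' is exactly where this quantitative content must live, and without it the later coefficient bound $|a_{ij}-\delta_{ij}|\le C|x|^{-\tau}$ is unavailable.

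\emph{The normal linear coefficient $b_n$ is not determined by the blow-down.} You set $v = u - Q$ with $Q = \tfrac12 x^TAx + b\cdot x + c$ already in hand, but the blow-down sees only the quadratic scale: the term $b_nx_n$ vanishes on $\{x_n=0\}$ and is of order $|x|$, hence invisible to it. The paper's linear step is therefore organized in two passes. Writing $V = u(T^{-1}x) - \tfrac12|x|^2$, it first applies the barrier estimate (Theorem \ref{TM_Asymp_Behr_of_solu_of_Linear_Eq}) to the \emph{tangential} derivatives $V_k$, $k<n$, which do vanish on $\{x_n=0\}$, obtaining $|V_k|\le Cx_n/|x|^n$; from the resulting decay of $V_{kn}(x',0)$ it deduces that $V_n(x',0)$ has a finite limit $b_n$ as $|x'|\to\infty$, then upgrades this to $V_n(x)\to b_n$ via Theorem \ref{Co_Limits_of_solutions_of_linear_Eq}, and only \emph{then} applies the barrier to $V - b_nx_n$. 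Your proposal presupposes the output of this step.

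On the decay mechanism itself: rather than a Perron/boundary-Harnack iteration, the paper builds the explicit supersolution $w(x) = x_n/|x|^n - (x_n/|x|^n)^{1+\delta}$ for any operator $a_{ij}D_{ij}$ with $|a_{ij}-\delta_{ij}|\le|x|^{-s}$. This yields the sharp Poisson rate in a single comparison. Since by this stage the coefficients are uniformly elliptic with a known rate of convergence to the identity, neither a boundary Harnack inequality for linearized Monge--Amp\`ere nor the Caffarelli--Guti\'errez machinery is needed, and the uniformity-at-infinity issue you flag simply does not arise.
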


\begin{remark}
(i) \eqref{SZ_u_is_quadra_increasing_at_infty} is reasonable. In fact,
the convex function
\[
    u(x_1,x_2)=\frac{x_1^2}{2(x_n+1)}+\frac{1}{2}(x_2^2+\cdots+x_{n-1}^2)+\frac{1}{6}(x_n^3+3x_n^2)
\]
solves
\begin{equation}\label{EQ_det=1_and_u=|x'|/2}
        \left\{
              \begin{aligned}
                    &\det D^2u=1\quad\mbox{in }\mathbb{R}^n_+,\\
                    &u=\frac{1}{2}|x'|^2\quad\quad\;\;\mbox{on }\{x_n=0\},\\
              \end{aligned}
       \right.
\end{equation}
 but it is not a quadratic polynomial (cf. \cite{Mooney-The-Monge-Ampere-equation,Savin-2014-JDifferentialEquations}).
%It is also remarkable here that in exterior domains of the whole space, the second order growth of $u$ can be obtained by Alexandrov estimate (cf. \cite{Caffarelli-Liyanyan-2003-CommPureApplMath}).

(ii) Since the boundary condition, the asymptotic rate of solutions at infinity in exterior domains of the half space are faster than in exterior domains of the whole space, and also, the logarithm terms are ruled out as $n=2$.
$\hfill\Box$
\end{remark}

The following corollary is well known (cf.
\cite{Mooney-The-Monge-Ampere-equation,Savin-2014-JDifferentialEquations}) and is a simple consequence of Theorem \ref{TM_Main_TM}.

\begin{corollary}\label{Co_f=1}
    Let $u$ be a convex viscosity solution of
\begin{equation}\label{EQ_det=1_bdry=|x'|^2/2}
        \left\{
             \begin{aligned}
                  &\det D^2u=1\quad\mbox{in }\mathbb{R}^n_+,\\
                  &u=p(x)\quad\quad\mbox{ on }\{x_n=0\}\\
            \end{aligned}
       \right.
\end{equation}
 and satisfy (\ref{SZ_u_is_quadra_increasing_at_infty}),
 where $p(x)$ is a quadratic polynomial satisfying $D^2p>0$.
 Then $u$ is a quadratic polynomial.
\end{corollary}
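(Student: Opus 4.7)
The plan is to deduce the corollary from Theorem~\ref{TM_Main_TM} in essentially three steps: first extract the asymptotic quadratic polynomial $Q$, then use the boundary condition to show $u=Q$ on $\partial\mathbb{R}^n_+$, and finally apply a linearization-plus-maximum-principle argument to propagate the equality into the interior.

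Since $f\equiv 1$, the support $\Omega_0$ in \eqref{SZ_supp_f-1_is_Bounded} is empty, so any $R_0>0$ works. Hence Theorem~\ref{TM_Main_TM} applies and yields a quadratic polynomial $Q(x)=\tfrac12 x^TAx+b\cdot x+c$ with $A>0$ and $\det A=1$, together with $u\in C^\infty(\overline{\mathbb{R}}^n_+)$ and the decay estimate $|u(x)-Q(x)|\le C x_n/|x|^n$ outside some half-ball $B_R^+$. Restricting this estimate to $\{x_n=0\}$ gives $u(x',0)=Q(x',0)$ for $|x'|\ge R$. But $u(x',0)=p(x',0)$ is a polynomial in $x'$ and so is $Q(x',0)$, so the polynomial $p(x',0)-Q(x',0)$ vanishes outside a ball, hence identically. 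Therefore $u=Q$ on all of $\partial\mathbb{R}^n_+$.

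Set $w:=u-Q$, which is smooth on $\overline{\mathbb{R}}^n_+$, vanishes on $\{x_n=0\}$, and tends to $0$ at infinity. Both $u$ and $Q$ are strictly convex solutions of $\det D^2 v=1$, so using the fundamental theorem of calculus along the segment $D^2U_t:=tD^2u+(1-t)D^2Q$, we may write
\begin{equation*}
0=\det D^2u-\det D^2Q=\int_0^1 \frac{d}{dt}\det(D^2U_t)\,dt=a^{ij}(x)\,w_{ij}(x),
\end{equation*}
where $a^{ij}(x)=\int_0^1\operatorname{cof}(D^2U_t)^{ij}\,dt$. Since each $D^2U_t$ is positive definite, so is each cofactor matrix, and hence so is $a^{ij}(x)$; in particular the operator $L:=a^{ij}\partial_{ij}$ is locally uniformly elliptic with smooth coefficients.

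Now exhaust $\mathbb{R}^n_+$ by the half-balls $B_\rho^+$ with $\rho\to\infty$. On $B_\rho^+$, $Lw=0$ with $w=0$ on the flat portion of the boundary and $|w|\le\sup_{|x|=\rho,\,x_n\ge 0}|w|$ on the spherical portion; by the classical maximum principle $|w|\le\sup_{|x|=\rho}|w|$ throughout $B_\rho^+$. Letting $\rho\to\infty$ and using $w\to 0$ at infinity forces $w\equiv 0$, i.e.\ $u\equiv Q$, which is the quadratic polynomial we wanted. The main, and essentially only, subtle point is ensuring the coefficient matrix $a^{ij}$ is indeed (locally) elliptic so that the exhaustion argument applies; this in turn rests on the interior smoothness and strict convexity provided by Theorem~\ref{TM_Main_TM}.
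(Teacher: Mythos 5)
Your proof is correct and follows essentially the same route as the paper: apply Theorem~\ref{TM_Main_TM} to get the asymptotic quadratic $Q$, observe that the boundary condition and the decay estimate force $u=Q$ on $\{x_n=0\}$, linearize the determinant to get an elliptic equation for $u-Q$, and conclude by the maximum principle plus the decay at infinity. You simply spell out the details the paper leaves terse (the polynomial argument on $\{x_n=0\}$, the cofactor formula for the coefficients, and the exhaustion by half-balls $B_\rho^+$).
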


The next theorem gives the existence and uniqueness of solutions of \eqref{SZ_Main_TM_Equation}
with prescribed asymptotic behavior at infinity.

\begin{theorem}\label{TM_Main_TM_extence}
Let $p(x)$ be a quadratic polynomial satisfying $D^2p>0$ and
$f\geq0$ be a bounded function satisfying (\ref{SZ_supp_f-1_is_Bounded}).
Then for any symmetric positive definite matrix $A$ with $\det A=1$,
vector $b\in \mathbb{R}^n$ and constant $c\in\mathbb{R}$ with the compatibility condition
$$p(x',0)=\frac{1}{2}(x',0)^TA(x',0)+b\cdot (x',0)+c,$$
there exists a unique convex solution $u\in C^{\infty} (\overline{\mathbb{R}}^n_+\backslash \Omega_0)$ of (\ref{SZ_Main_TM_Equation})
 satisfying
 \begin{equation}\label{SZ-ext-tend}
      \lim_{|x|\rightarrow\infty}\left|u(x)-\left(\frac{1}{2}x^TAx+b\cdot x+c\right)\right|=0.
 \end{equation}
 Moreover, \eqref{SZ_Main_TM_Asym_Behar_of_u} and \eqref{SZ_Main_TM_Asym_Behar_of_D^ku} hold.
\end{theorem}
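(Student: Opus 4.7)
For uniqueness, I would apply the Monge--Amp\`ere comparison principle on an exhaustion of $\mathbb{R}^n_+$ by half-balls. Let $u_1,u_2$ be two convex solutions and write $P_A(x):=\tfrac12 x^TAx+b\cdot x+c$. Since $u_i-P_A\to 0$ at infinity, for every $\varepsilon>0$ we can pick $R_\varepsilon$ so that $|u_1-u_2|\le\varepsilon$ on $\partial B_{R_\varepsilon}\cap\overline{\mathbb{R}^n_+}$, while $u_1=u_2=p$ on $\{x_n=0\}\cap B_{R_\varepsilon}$. The standard comparison principle for convex viscosity solutions of $\det D^2 u=f$ on the bounded convex domain $B_{R_\varepsilon}^+$, applied to $u_1$ and $u_2\pm\varepsilon$, gives $|u_1-u_2|\le\varepsilon$ throughout; letting $\varepsilon\to 0$ yields $u_1\equiv u_2$.

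For existence, the plan is to solve a family of Dirichlet problems on expanding half-balls and pass to the limit. For each $R>R_0$, consider
\[
 \det D^2u_R=f \text{ in } B_R^+,\quad u_R=p \text{ on } \{x_n=0\}\cap B_R,\quad u_R=P_A \text{ on } \partial B_R\cap\overline{\mathbb{R}^n_+}.
\]
The data is continuous by the compatibility condition, the domain is convex, and the classical existence and boundary regularity theory (Caffarelli--Nirenberg--Spruck, together with the flatness of $\{x_n=0\}$ and $D^2p>0$) yields a unique convex solution $u_R$, smooth up to the flat portion of the boundary.

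The central step is an $R$-independent closeness estimate of $u_R$ to $P_A$ near infinity. I would build sub- and supersolutions of the form $P_A\pm w$, with $w\ge 0$, $w=0$ on $\{x_n=0\}$, $w\to 0$ at infinity, and satisfying $\det D^2(P_A+w)\le f$, $\det D^2(P_A-w)\ge f$ in $\mathbb{R}^n_+\setminus B_{R_0}^+$. Since the linearization of $\det$ at $A$ is $L:=\mathrm{tr}(A^{-1}D^2\cdot)$ and $f=1$ outside $B_{R_0}^+$, the natural choice is $w:=\varepsilon K$, where $K$ is the Poisson kernel of $L$ on the half-space, obtained from the standard Laplace--Poisson kernel by the affine change of variables $y=A^{1/2}x$. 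Then $K$ decays like $x_n/|x|^n$, vanishes on $\{x_n=0\}$, and satisfies $LK=0$; the quadratic Taylor error in $\det$ is of order $|D^2w|^2=O(|x|^{-2n-2})$, absorbed for $\varepsilon$ small and $|x|$ large. Combined with a uniform-in-$R$ bound for $u_R-P_A$ on $\partial B_{R_0}^+$ (from interior barriers using the strict convexity of $P_A$), the comparison principle on the annular half-region $B_R^+\setminus B_{R_0}^+$ yields $|u_R-P_A|\le Cw$ uniformly in $R$.

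Finally, local Pogorelov interior estimates and Caffarelli's $C^{2,\alpha}$ boundary regularity give equicontinuity of $\{u_R\}$ on compact subsets of $\overline{\mathbb{R}^n_+}$; a diagonal extraction produces a locally uniform limit $u$ which is convex, solves \eqref{SZ_Main_TM_Equation}, and belongs to $C^\infty(\overline{\mathbb{R}^n_+}\setminus\Omega_0)$. The estimate $|u-P_A|\le Cw\to 0$ gives \eqref{SZ-ext-tend} and in particular implies \eqref{SZ_u_is_quadra_increasing_at_infty}, so Theorem \ref{TM_Main_TM} applies to $u$ and yields a quadratic polynomial $\tilde P$ with $|u-\tilde P|\le Cx_n/|x|^n$; then $\tilde P-P_A$ is a polynomial vanishing at infinity, forcing $\tilde P\equiv P_A$, and the quantitative estimates \eqref{SZ_Main_TM_Asym_Behar_of_u} and \eqref{SZ_Main_TM_Asym_Behar_of_D^ku} come directly from Theorem \ref{TM_Main_TM}. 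The principal obstacle will be the barrier construction in the third paragraph: producing a genuine Monge--Amp\`ere sub- and supersolution with the sharp Poisson-kernel decay $x_n/|x|^n$ globally (not merely pointwise) is what ultimately forces the limit to inherit exactly the prescribed polynomial $P_A$, rather than some other quadratic allowed by Theorem \ref{TM_Main_TM}.
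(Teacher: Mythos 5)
Your overall architecture — uniqueness by comparison on an exhaustion, existence by solving Dirichlet problems on expanding half-balls and extracting a locally uniform limit, then invoking Theorem \ref{TM_Main_TM} for the quantitative rate — is the same as the paper's. The place where you diverge, and where there is a genuine gap, is the barrier construction.

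You propose $P_A\pm\varepsilon K$ with $LK=0$, where $L=\mathrm{tr}(A^{-1}D^2\,\cdot\,)$. Writing $A=B^TB$ and $H=B^{-T}(\varepsilon D^2K)B^{-1}$, one has $\det D^2(P_A\pm\varepsilon K)=\det(I\pm H)$ with $\mathrm{tr}(H)=\varepsilon\,LK=0$. Whenever $I\pm H>0$, the AM--GM inequality gives $\det(I\pm H)\le\bigl(\tfrac1n\mathrm{tr}(I\pm H)\bigr)^n=1$. So \emph{both} $P_A+\varepsilon K$ and $P_A-\varepsilon K$ are supersolutions of $\det D^2\,\cdot\,=1$; the subsolution inequality $\det D^2(P_A-\varepsilon K)\ge 1$ you assert is actually reversed. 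This is not a small-error issue that can be ``absorbed'': the sign of the quadratic remainder is forced by concavity of $\det^{1/n}$, and the only way to repair the lower barrier in your framework is to take a $w$ with $Lw<0$ strictly (e.g.\ of the form $K-K^{1+\delta}$, as in the proof of Theorem \ref{TM_Asymp_Behr_of_solu_of_Linear_Eq}), which you would then need to calibrate against the remainder $O(|D^2w|^2)$. There is a second, unaddressed issue of the same flavor: $K$ vanishes on $\{x_n=0\}$, so to conclude $|u_R-P_A|\le C\varepsilon K$ from the boundary of the annular region you must control $|u_R-P_A|$ on $\partial B_{R_0}^+$ with an $x_n$-linear rate near $\{x_n=0\}$, not just a sup bound; ``interior barriers using the strict convexity of $P_A$'' do not give this.

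The paper sidesteps both difficulties by using much cruder, one-dimensional barriers: $u_\pm(x)=\tfrac12|x'|^2+\int_0^{x_n}\!\!\int_0^t f_\pm(s)\,ds\,dt$ with $\mathrm{support}(f_\pm-1)\subset[0,1]$ and $0\le f_+\le 1\le f_-\le\Lambda$. These are exact solutions of $\det D^2u_\pm=f_\pm(x_n)$, give $\tfrac12|x|^2-(\Lambda-1)x_n\le u_R\le\tfrac12|x|^2+x_n$ uniformly in $R$, and in particular give the $D_nu_R$ bounds on $\{x_n=0\}$ that drive local Lipschitz compactness. The resulting coarse bound is all you need to pass to a limit $\check u$ and verify (\ref{SZ_u_is_quadra_increasing_at_infty}); Theorem \ref{TM_Main_TM} then gives the sharp $x_n/|x|^n$ asymptotics of $\check u$ relative to \emph{some} polynomial, and the prescribed asymptotic $P_A$ is recovered by adding a suitable affine function (and an affine change of variable reducing $A$ to $I_n$). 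In short, trying to build the sharp Poisson rate directly into the barriers is both unnecessary and, as written, not achievable on the subsolution side.
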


The paper is organized as follows.
In Section 2, we introduce Pogorelov estimate in half domain and then give as a corollary the estimate of derivatives of solutions of Monge-Amp\`ere equations in half spaces.
In Section 3, it is obtained asymptotic behavior at infinity of solutions of
linear uniformly elliptic equations in exterior domains in half spaces that extends the results in D. Gilbarg and J. Serrin \cite{Gilbarg-Serrin-1995-JAnalyseMath} to the half space case.
In Section 4, to show Theorem \ref{TM_Main_TM},
 the idea in \cite{Caffarelli-Liyanyan-2003-CommPureApplMath} is borrowed and the results of Section 2 and Section 3 are applied to linearized equation of (\ref{SZ_Main_TM_Equation}).
In Section 5, Corollary \ref{Co_f=1} and Theorem \ref{TM_Main_TM_extence} are proved.

\bigskip

Throughout this paper, we use the following standard notations.
\begin{itemize}
\item For any $x\in \mathbb {R}^n$,  $x=(x_1,x_2,\cdots,x_n)=(x',x_n)$, $x'\in\mathbb {R}^{n-1}.$

\item $\mathbb {R}^n_+=\{x\in \mathbb {R}^n:x_n>0\}$; $\overline{\mathbb {R}}^n_+=\{x\in \mathbb {R}^n:x_n\geq0\}$.

\item For any $x\in \mathbb {R}^n$ and $r>0$, $B_r(x)=\{y\in \mathbb {R}^n:|y-x|<r\}$ and $B_r^+(0)=B_r(0)\cap\{x_n>0\}$. $B_r=B_r(0)$ and $B_r^+=B_r^+(0)$.

\item For any $r>0$, $Q_r^+=\{x\in \mathbb {R}^n:|x'|<r,~ 0<x_n<r\}$.

\end{itemize}

\section{Pogorelov estimate in half domain}

We start with the definition of viscosity solution.

\begin{definition}
Let $\Omega$ be an open subset of $\mathbb{R}^n$,
$u\in C(\Omega) $ be a convex function and $f\in C(\Omega)$, $f\geq0$.
The convex function $u$ is a viscosity \emph{subsolution} (\emph{supersolution}) of the equation
$\det D^2u=f$ in $\Omega$
if whenever convex $\phi(x)\in C^2(\Omega)$ and $x_0\in \Omega$ are such that
$(u-\phi)(x)\leq(\geq)(u-\phi)(x_0)$ for all $x$ in a neighborhood of $x_0$,
then we must have
\[
    \det D^2\phi(x_0)\geq(\leq)f(x_0).
\]

If $u$ is a viscosity subsolution and supersolution,
we call it viscosity solution.
$\hfill\Box$
\end{definition}

The following so called {\it Pogorelov estimate in half domain} was obtained by O. Savin \cite[Proposition 6.1, Remark 6.3 and Theorem 6.4]{Savin-2013-JAmerMathSoc}, which gives the boundary pointwise $C^{2,\alpha}$ estimates when the domain is not strictly convex.
It is also crucial to establish our main results.

\begin{theorem}[Pogorelov estimate in half domain]\label{TM_Pogorelov_estimates_in_half_domain}
Let $0<\rho_1$, $\rho_2$, $\rho_3<1$ be three constants and $\Omega$ be a convex domain such that \begin{equation*}
    B_{\rho_1}^+\subset \Omega \subset B_{\rho_1^{-1}}^+.
\end{equation*}
Assume that $u\in C(\overline\Omega)$ is a convex viscosity solution of
\begin{equation*}
     \left\{
           \begin{aligned}
                 &\det D^2u=1 \quad\;\; \mbox{in } \Omega,\\
                &u= p(x) \quad\quad\;\;\;\, \mbox{on } \{x_n=0\}\cap\partial\Omega,\\
                 &\rho_2\leq u\leq \rho_2^{-1} \quad\mbox{on }\{x_n>0\}\cap\partial\Omega,
           \end{aligned}
     \right.
\end{equation*}
where $p(x)$ is a quadratic polynomial satisfying
\begin{equation*}
    \rho_3|x'|^2\leq p(x',0)\leq\rho_3^{-1}|x'|^2.
\end{equation*}
Then there exists $c_0>0$ depending only on $n$, $\rho_1$, $\rho_2$ and $\rho_3$ such that \begin{equation}\label{SZ_PogoES}
    ||u||_{C^{3,1}(\overline{B}_{c_0}^+)}\leq c_0^{-1}.
\end{equation}
\end{theorem}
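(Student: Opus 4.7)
The plan is to obtain (\ref{SZ_PogoES}) by combining an affine normalization with a boundary localization result, and then bootstrapping via the linearized equation. Since the boundary data is the quadratic polynomial $p$, the first step is to replace $u$ by $v=u-p$, so that $v$ vanishes on $\{x_n=0\}\cap\partial\Omega$ and solves $\det(D^2p+D^2v)=1$ with $D^2p>0$. The bounds $\rho_3|x'|^2\le p\le \rho_3^{-1}|x'|^2$ on the flat face, together with $\rho_2\le u\le\rho_2^{-1}$ on the curved part, trap $v$ between multiples of $x_n$ near the flat boundary, which is exactly the setting for Savin's boundary localization theorem.

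I would then apply the boundary localization theorem of Savin to show that the sections $S_h(0)=\{v<h\}$ are equivalent, after a unimodular sliding affine transformation $T_h$ fixing $\{x_n=0\}$, to a half-ball of radius $\sqrt{h}$; that is, $cB_{\sqrt{h}}^+\subset T_hS_h(0)\subset c^{-1}B_{\sqrt{h}}^+$ with $\|T_h\|,\|T_h^{-1}\|\le C|\log h|$. Rescaling the normalized section by the Monge-Amp\`ere invariance $\tilde v(y)=h^{-1}v(hT_h^{-1}y)$ (with appropriate powers) reduces the problem, for each dyadic $h$, to a universal solution on a domain close to a half-ball with quadratic boundary data, where one is far enough from the "bad" flat part to run pointwise estimates.

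On the normalized profile, I would apply a Pogorelov-type maximum-principle argument using the auxiliary function $w=\eta(v)\,e^{\alpha|\nabla v|^2/2}\,v_{ee}$ for a direction $e$, where $\eta$ is a cutoff supported in $S_h(0)$ vanishing on $\partial S_h(0)\cap\{v=h\}$. Tangential second derivatives $v_{\tau\tau}$ for $\tau\perp e_n$ are already controlled on the flat face because $v\equiv0$ there forces $v_{\tau\tau}=0$, and the equation then yields control of $v_{nn}$ up to the flat boundary. Combining this with the classical interior Pogorelov estimate away from $\{x_n=0\}$ gives a uniform $C^2$ bound on the normalized solution, hence a $C^{2,\alpha}$ bound via Caffarelli's $W^{2,p}/C^{2,\alpha}$ theory applied on the good-shape sections.

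Once $D^2v$ is uniformly elliptic and H\"older continuous up to the flat face, I would differentiate the equation $\log\det D^2u=0$ in a tangential direction $x_k$ ($k<n$) to obtain the linearized equation $u^{ij}\partial_{ij}(\partial_k u)=0$ with uniformly elliptic, $C^\alpha$ coefficients and known boundary data $\partial_k p$ on $\{x_n=0\}$; boundary Schauder then gives $C^{2,\alpha}$ estimates for $\partial_k u$, i.e.\ $C^{3,\alpha}$ estimates for $u$ in tangential directions, and the equation recovers the remaining normal derivatives, upgrading to $C^{3,1}$ on $\overline{B}_{c_0}^+$. The central obstacle is the absence of strict convexity at the flat boundary, which invalidates the classical Pogorelov strategy directly; overcoming it is precisely the role of Savin's boundary localization, and executing the rescaling/maximum-principle step uniformly in the localization scale is the only delicate point.
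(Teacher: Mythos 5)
The paper does not prove Theorem \ref{TM_Pogorelov_estimates_in_half_domain} at all: immediately before the statement it is attributed to O.~Savin, citing \cite[Proposition 6.1, Remark 6.3 and Theorem 6.4]{Savin-2013-JAmerMathSoc}, and the result is then used as a black box. So there is no ``paper proof'' to compare against; what you have written is a reconstruction of Savin's argument.

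As a high-level sketch your plan is aligned with the right sources (Savin's boundary localization, affine normalization of sections, a Pogorelov-type second-derivative estimate, then Caffarelli $C^{2,\alpha}$ theory and Schauder bootstrap), but the central step is glossed over in a way that would not close. In the Pogorelov auxiliary function $w=\eta(v)e^{\alpha|\nabla v|^2/2}v_{ee}$, the cutoff $\eta$ vanishes only on $\{v=h\}$, not on the flat face, so the maximum of $w$ could sit on $\{x_n=0\}$. For tangential $e$ this is harmless since $v\equiv 0$ on the flat face forces $v_{ee}=0$ there, and that part of your argument is fine. But you then assert that ``the equation yields control of $v_{nn}$ up to the flat boundary.'' This does not follow: the equation $\det(D^2p+D^2v)=1$ does not let you solve for $v_{nn}$ from the tangential block alone; you first need the mixed derivatives $v_{\tau n}$, and these are exactly what is \emph{not} controlled by the tangential data $v|_{x_n=0}=0$ together with the equation. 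Obtaining the full Hessian bound up to the flat, non-strictly-convex boundary is precisely the content of Savin's boundary $C^{1,1}/C^{2,\alpha}$ estimates, which require a genuinely separate argument (obliqueness/mixed-derivative estimates combined with the localization geometry), not a direct application of the interior Pogorelov identity. There is also a minor issue at the start: the hypothesis constrains only $p(x',0)$, so $D^2p$ need not be positive (or even definite); the normalization should subtract the tangential restriction $p(x',0)$ rather than the full quadratic $p$, or one must otherwise justify the reduction. In short, the outline is in the right spirit, but the mixed/normal second-derivative estimate at the flat boundary is a genuine gap that your sketch does not fill, and it is exactly the hard part that the cited Savin theorems supply.
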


Theorem \ref{TM_Pogorelov_estimates_in_half_domain} together with interior Pogorelov estimates and the Schauder estimates implies the following corollary, which will be used to obtain the linear part of the quadratic polynomial in (\ref{SZ_Main_TM_Asym_Behar_of_u}).

\begin{corollary}\label{LM_D^kv_est}
Assume that $u\in C^\infty(\mathbb{\overline{R}}^n_+\backslash  B_1^+)$ satisfies
\begin{equation}\label{SZ-w}
     \left\{
           \begin{aligned}
  &[I_n+D^2u]>0,\quad\det (I_n+D^2u)=1\quad\mbox{and}\quad
   \left|u(x)\right|\leq \frac{\beta}{|x|^\gamma}
    \quad\mbox{in}~\mathbb{R}^n_+\backslash B^+_1,\\
    &u(x)=0\quad\mbox{on}~\{x: |x'|\geq1,\;x_n=0\}
    \end{aligned}
     \right.
  \end{equation}
for some constants $\beta>0$ and $\gamma>-2$.
Then
\[
    \left|D^ku(x)\right| \leq\frac{C}{|x|^{\gamma+k}}
    \quad \mbox{in }\mathbb{\overline{R}}^n_+\backslash B^+_{R_0}
\]
for any $ k\geq1$, where $R_0\geq1$ depends only on $n$, $\beta$ and $\gamma$, and $C$ depends only on $n$,  $\beta$, $\gamma$ and $k$.
\end{corollary}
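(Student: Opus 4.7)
The idea is to rescale $w(x) := \tfrac{1}{2}|x|^2 + u(x)$ around each far-out point $x_0$, apply Pogorelov-type estimates to obtain $R$-independent $C^k$ bounds on the rescaled solution, and then extract the decay of $u$ from its smallness via linearization of $\det(I_n + D^2 v) = 1$. Observe that $w$ is convex with $\det D^2 w = 1$ in $\mathbb{R}^n_+ \setminus B^+_1$, and on the flat boundary $\{x_n = 0,\;|x'| \geq 1\}$ equals $\tfrac{1}{2}|x'|^2$.

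Fix $x_0$ with $R := |x_0|$ large and split according to the distance of $x_0$ from $\{x_n = 0\}$. In the interior case $x_{0,n} \geq R/4$, set $\rho := R/8$ and rescale $\tilde w(y) := \rho^{-2} w(x_0 + \rho y)$ on $B_1 \subset \mathbb{R}^n_+$; after subtracting the affine part of the quadratic $\tfrac{1}{2\rho^2}|x_0 + \rho y|^2$ one gets a bounded convex solution of $\det D^2 \tilde w = 1$ to which interior Pogorelov and Schauder bootstrapping apply, yielding $\|\tilde w\|_{C^k(\overline{B}_{1/2})} \leq C_k$ uniformly in $R$. In the boundary case $x_{0,n} < R/4$, center instead at $\bar x := (x_0', 0)$, set $\rho := R/2$, and rescale $\tilde w(y) := \rho^{-2} w(\bar x + \rho y)$ on $B^+_1$. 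Since $|\bar x| \geq \tfrac{\sqrt{15}}{4} R$, for $R$ large the rescaled domain stays outside $B_1$ and $|\bar x' + \rho y'| \geq 1$, so $\tilde w(y', 0) = \tfrac{1}{2\rho^2}|\bar x + \rho y'|^2$. Subtracting the affine part $L(y) := \tfrac{|\bar x|^2}{2\rho^2} + \tfrac{\bar x \cdot y'}{\rho}$ produces flat boundary data $\tfrac{1}{2}|y'|^2$ (so one can take $\rho_3 = 1/2$ in Theorem \ref{TM_Pogorelov_estimates_in_half_domain}), while on the upper hemisphere $\tilde w - L = \tfrac{1}{2} + \rho^{-2} u(\bar x + \rho y) = \tfrac{1}{2} + O(R^{-\gamma-2})$ lies in $[1/4, 3/4]$ for $R$ large by the hypothesis $\gamma > -2$. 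Theorem \ref{TM_Pogorelov_estimates_in_half_domain} with $\rho_1 = 1$, $\rho_2 = 1/4$, $\rho_3 = 1/2$, followed by Schauder bootstrap, then gives $\|\tilde w\|_{C^k(\overline{B}^+_{c_0})} \leq C_k$ uniformly in $R$.

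Having these uniform $C^k$ bounds, I linearize to extract the decay. Define $v(y) := \rho^{-2} u(z_0 + \rho y)$, where $z_0 = x_0$ (resp.\ $z_0 = \bar x$) in the interior (resp.\ boundary) case. Then $D^2 v(y) = D^2 u(z_0 + \rho y)$ in the rescaled coordinates, so $\det(I_n + D^2 v) = 1$, and $|v(y)| \leq \beta \rho^{-2} |z_0 + \rho y|^{-\gamma} \leq C R^{-\gamma-2}$ throughout the rescaled domain. Rewriting $\det(I_n + D^2 v) - \det(I_n) = 0$ via the fundamental theorem of calculus as $a^{ij}(y) v_{ij}(y) = 0$ with $a^{ij}(y) := \int_0^1 [\mathrm{cof}(I_n + tD^2 v(y))]_{ij}\, dt$, the previous step ensures that $a^{ij}$ is smooth, bounded in every $C^k$ norm independently of $R$, and uniformly elliptic (the eigenvalues of $I_n + D^2 v$ are bounded above by that same step, and hence below using $\det = 1$). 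Moreover, in the boundary case $v \equiv 0$ on $\{y_n = 0\}$.

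Standard linear Schauder estimates (interior, or boundary Schauder with zero Dirichlet data on the flat part) now give $\|v\|_{C^k} \leq C_k \|v\|_{C^0} \leq C_k R^{-\gamma-2}$ on a slightly smaller subdomain, and scaling back yields $|D^k u(x_0)| = \rho^{2-k} |D^k v(y_0)| \leq C_k R^{-\gamma-k}$, which is the desired estimate. The main delicate point is the verification of the hypotheses of Theorem \ref{TM_Pogorelov_estimates_in_half_domain} with constants $\rho_1, \rho_2, \rho_3$ independent of $R$: this rests on the observation that $x_{0,n} < R/4$ forces $|\bar x|$ to be of order $R$, which in turn makes the upper-hemisphere values of $\tilde w - L$ concentrate around $\tfrac{1}{2}$ as $R \to \infty$, and the assumption $\gamma > -2$ is used precisely here to keep the perturbation $\rho^{-2} u$ small in the rescaled picture.
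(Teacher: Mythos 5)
Your proposal follows essentially the same route as the paper: rescale around each far-out point (projecting to the flat boundary in the near-boundary case, or centering at the point in the interior case), apply Savin's Pogorelov estimate in the half domain (resp.\ interior Pogorelov) plus Schauder bootstrap to get uniform $C^k$ bounds on the rescaled unknown, then linearize $\det(I_n+D^2v)=1$ and apply Schauder once more with $\|v\|_{C^0}=O(R^{-\gamma-2})$ to extract the decay. The cosmetic differences (you subtract an affine function after rescaling rather than shifting the quadratic beforehand; you use the cofactor form of the linearization rather than $\ln\det$) are immaterial.

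One arithmetic detail needs fixing: you split into cases at $x_{0,n}<R/4$ vs.\ $x_{0,n}\ge R/4$, and in the boundary case with $\rho=R/2$ the rescaled point $y_0=(0,2x_{0,n}/R)$ satisfies only $|y_0|<1/2$. But Theorem \ref{TM_Pogorelov_estimates_in_half_domain} and the subsequent Schauder bootstrap give bounds only on $\overline{B}^+_{c_0'}$ for some $c_0'=c_0'(n)$ that is not guaranteed to reach $1/2$; so for $x_0$ with $c_0'R/2\le x_{0,n}<R/4$ neither of your two cases produces the estimate at $y_0$. The fix is to align the threshold with the Pogorelov radius (e.g.\ split at $x_{0,n}=\theta R$ with $\theta$ a small universal multiple of $c_0$, enlarging the interior ball accordingly), which is exactly what the paper does with $\theta=c_0/4$. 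Also, minor: $\rho_1=1$ is not admissible in Theorem \ref{TM_Pogorelov_estimates_in_half_domain} (which requires $\rho_1<1$); take, say, $\rho_1=1/2$.
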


\begin{proof}
We divide the proof into two cases.

{\it Case 1. Boundary point.}~
For any $x_0\in \{|x|=R\geq 3,x_n=0\}$,
let
\[
   \eta(x)=u(x)+\frac{1}{2}|x-x_0|^2,\quad x\in \mathbb{R}^n_+
\]
and
\[
   \eta_R(y)=\left(\frac{4}{R}\right)^2 \eta\left(x_0+\frac{R}{4}y\right),
   \quad y\in B^+_2.
\]
From (\ref{SZ-w}), it is easy to see that
\begin{equation*}
     \left\{
           \begin{aligned}
     &[D^2\eta_R(y)]> 0\quad\mbox{and}\quad\det D^2\eta_R(y)=1\quad\mbox{in}~B_2^+,\\
    &\eta_R(y)=\frac{1}{2}|y|^2\quad\mbox{on}~\partial B_2^+\cap\{y_n=0\},\\
    &1\leq \eta_R(y)\leq3\quad\mbox{on}~\{x_n>0\}\cap\partial B_2^+
 \end{aligned}
     \right.
  \end{equation*}
for $R\geq R_0$ as $R_0$ is large enough depending only on $\beta$ and $\gamma$.

By Theorem \ref{TM_Pogorelov_estimates_in_half_domain},
there exists $c_0>0$ depending only on $n$ such that
\begin{equation*}
  || \eta_R(y)||_{C^{3,1}(\overline{B}_{c_0}^+)}\leq c_0^{-1}.
\end{equation*}
This together with $\det D^2 \eta_R(y)=1$ implies
\begin{equation}\label{SZ-est-D2etaR}
     C^{-1}I_n\leq [D^2 \eta_R(y)]\leq CI_n
     \quad\mbox{ in }y\in \overline{B}_{c_0}^+,
\end{equation}
where $C$ depends only on $n$.
Differentiating $\ln(\det D^2\eta_R)=0$ with respect to $y_k$,
 we get
\[
     a_{ij}(y)D_{ij}({\eta_R})_k(y)=0
     \quad\mbox{in } B_{c_0}^+,
\]
where $ a_{ij}(y)=\{[D^2\eta_R]^{-1}\}_{ij}(y)$.
By the Schauder estimates,
\begin{equation*}\label{Sz-est-eta}
  ||\eta_R(y)||_{C^{k}(\overline{B}_{c_0/2}^+)}\leq C
\end{equation*}
for any $k\geq 1$, where $C$ depends only on $n$ and $k$.

Let
\[
    u_R(y)=\eta_R(y)-\frac{1}{2}|y|^2=\left(\frac{4}{R}\right)^2 u\left(x_0+\frac{ R}{4}y\right),
    \quad y\in B^+_2.
\]
By $\ln \det (D^2u_R+I_n)-\ln \det I_n=0$, we deduce
\begin{equation*}
    \left\{
        \begin{aligned}
            &\widetilde{a}_{ij}(y)D_{ij}u_R(y)=0
            \quad\mbox{in } B_{c_0/2}^+,\\
            &u_R(y)=0\quad\quad\quad\;\;\,\quad\mbox{on }\partial B_{c_0/2}^+\cap\{y_n=0\},\\
        \end{aligned}
    \right.
\end{equation*}
where $\widetilde{a}_{ij}(y)=\int_0^1[sD^2u_R(y)+I_n]^{ij}ds=
\int_0^1[sD^2\eta_R(y)+(1-s)I_n]^{ij}ds$ and
\[
    ||\widetilde{a}_{ij}||_{C^{k}(\overline{B}_{c_0/2}^+)}\leq C
       \quad\mbox{and}\quad
    C^{-1}I_n\leq [\widetilde{a}_{ij}]\leq CI_n
      \quad\mbox{on } \overline{B}_{c_0/2}^+.
\]
for any $k\geq 1$.
By the Schauder estimates,
\begin{align*}
    ||D^ku_R(y)||_{L^\infty(\overline{B}_{{c_0}/{4}}^+)}\leq C||u_R(y)||_{L^\infty(\overline{B}_{{c_0}/{2}}^+)}
             \leq CR^{-\gamma-2}
\end{align*}
for any $k\geq 1$,
where $C$ depends only on $n$,  $\beta$, $\gamma$ and $k$.

It follows that
\begin{equation}\label{SZ-Dkw-esti}
  |D^ku(x)|\leq \frac{C}{|x|^{\gamma+k}}
   \quad\mbox{in } \overline{B}_{\theta R}^+(x_0),
\end{equation}
for any $ k\geq1$,
where $\theta=\frac{c_0}4$ and $R_0\geq 1$
depend only on $n$, $\beta$ and $\gamma$,
and $C$ depends only on $n$, $\beta$, $\gamma$ and $k$.

{\it Case 2. Interior point.}
For any $x_0\in \{|x|=R\geq R_0,x_n\geq\theta R\}$, by similar arguments as {\it Case 1}, where we use interior Pogorelov estimates instead of
Theorem \ref{TM_Pogorelov_estimates_in_half_domain} (cf. \cite[Lemma 3.5]{Caffarelli-Liyanyan-2003-CommPureApplMath}), we obtain
\[
    |D^ku(x_0)|\leq\frac{C}{|x_0|^{\gamma+k}}
\]
for any $ k\geq1$.
\end{proof}

\section{Asymptotic behavior of linear elliptic equation in half space}

In this section we study the asymptotic behavior at infinity
of solutions of linear uniformly elliptic equations in non-divergence form outside a bounded domain of $\mathbb{R}^n_+$, which extends the results in D. Gilbarg and J. Serrin \cite{Gilbarg-Serrin-1995-JAnalyseMath} to half spaces and will be applied to the linearized equation of
(\ref{SZ_Main_TM_Equation}) in the proof of Theorem \ref{TM_Main_TM}.

We begin with the following two auxiliary lemmas.

\begin{lemma}\label{Lm_Int_Str_Small}
Let $R_0>0$ and $a_{ij}(x)\in C(\overline{B}_{4R_0}^+\backslash B_{R_0}^+)$ such that $\lambda I\leq [a_{ij}(x)]\leq \Lambda I$
in $\overline{B}_{4{R_0}}^+\backslash {B}_{R_0}^+$
for some $0<\lambda\leq\Lambda<\infty$.
Assume that
\begin{equation}\label{EQ_supsolu}
     \left\{
        \begin{aligned}
           &a_{ij}(x)D_{ij}u(x)=0\quad\mbox{in } B_{4{R_0}}^+\backslash \overline{B}_{R_0}^+,\\
           &u(x)\leq1 \quad\quad\quad\quad\;\;\mbox{ on }\partial( B_{4{R_0}}^+\backslash \overline{B}_{R_0}^+)\cap\{x_n>0\},\\
           &u(x) \leq\frac{1}{2}\quad\quad\;\;\quad\quad\mbox{on }
              \partial(B_{4{R_0}}^+\backslash\overline{B}_{R_0}^+)\cap\{x_n=0\}.
        \end{aligned}
     \right.
\end{equation}
Then
\[
    u(x)\leq 1-\varepsilon_0\quad \mbox{on } \partial{B}_{2{R_0}}\cap\{x_n\geq0\}
\]
for some $\varepsilon_0>0$
depending only on $n$, $\lambda$ and $\Lambda$.
\end{lemma}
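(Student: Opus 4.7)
The plan is to work with $v := 1 - u$. By the hypotheses and the weak maximum principle, $v$ is a nonnegative solution of $a_{ij}D_{ij}v = 0$ in the half-annulus, with $v \ge 0$ on the curved portion and $v \ge 1/2$ on the flat portion. The conclusion is equivalent to a uniform lower bound $v \ge \varepsilon_0$ on $\partial B_{2R_0} \cap \{x_n \ge 0\}$, and by scaling I may assume $R_0 = 1$.

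I would obtain this bound by comparison with the $L$-harmonic measure of the flat portion. Let $W$ be the (Perron or viscosity) solution of $a_{ij}D_{ij}W = 0$ in $B_4^+ \setminus \overline{B}_1^+$ with $W = 1$ on the open flat portion and $W = 0$ on the curved portion. Since $0 \le W \le 1$ and the boundary inequality $\tfrac12 W \le v$ holds on both portions, the comparison principle gives $v \ge \tfrac12 W$ throughout the half-annulus. It therefore suffices to show $W \ge 2\varepsilon_0$ on the middle hemisphere $\partial B_2 \cap \{x_n \ge 0\}$.

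To produce such a lower bound, anchor at $P_0 := (5/2, 0, \dots, 0)$, which lies in the interior of the flat portion at distance $3/2$ from both corner circles $\partial B_1 \cap \{x_n = 0\}$ and $\partial B_4 \cap \{x_n = 0\}$ where the boundary datum of $W$ jumps. Because $W$ is locally the constant $1$ on the flat boundary near $P_0$, the Krylov--Safonov boundary H\"older estimate supplies a radius $\delta = \delta(n, \lambda, \Lambda) > 0$ with $W \ge 1/2$ throughout $B_\delta^+(P_0)$. For any target $y$ on the middle hemisphere with $y_n \ge \delta/2$, a finite chain of overlapping interior balls---of length depending only on $n$ and $\delta$, and lying inside the open half-annulus away from the corners---connects $B_\delta^+(P_0)$ to $y$; iterating the interior Harnack inequality along the chain yields $W(y) \ge c(n, \lambda, \Lambda) > 0$. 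For $y$ on the middle hemisphere with $y_n < \delta/2$, the point $(y', 0)$ lies in the interior of the flat portion at distance at least $1/2$ from both corners, so the same boundary H\"older argument, applied at $(y', 0)$, produces $W(y) \ge 1/2$ directly.

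The principal difficulty is the discontinuity of $W$ at the two corner circles where its boundary datum jumps from $1$ to $0$. This is absorbed by insisting that the anchor $P_0$ and every ball in the Harnack chain remain at a fixed positive distance from them, so that the Krylov--Safonov boundary H\"older estimate and the interior Harnack inequality can be applied with constants depending only on $n$, $\lambda$, and $\Lambda$.
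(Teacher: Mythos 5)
Your argument is correct, but it takes a detour the paper avoids. Both proofs rest on the same two ingredients --- the Krylov boundary H\"older estimate to control $u$ near the flat boundary, and the interior Harnack inequality to propagate the bound away from it --- and both split $\partial B_2 \cap \{x_n \ge 0\}$ into a strip $\{0 \le x_n \le \delta\}$ and the remaining cap $\{x_n \ge \delta\}$. The paper applies these directly to $v := 1-u$: boundary H\"older continuity (together with $u \le 1$ from the maximum principle and $u\le\tfrac12$ on the flat boundary) forces $u \le \tfrac23$, i.e.\ $v \ge \tfrac13$, on $\partial B_2 \cap \{0 \le x_n \le \delta\}$, and then a single Harnack inequality for the nonnegative solution $v$ on the compact connected set $\partial B_2 \cap \{x_n \ge \delta\}$ gives $v \ge \tfrac1{3C}$ there; one takes $\varepsilon_0 = \tfrac1{3C}$. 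You instead introduce the $L$-harmonic measure $W$ of the flat portion, compare $v \ge \tfrac12 W$, and prove the analogous lower bound for $W$. This works in substance, but it imports a technicality that the direct route dodges: $W$ has discontinuous boundary data at the corner circles $\partial B_1 \cap \{x_n = 0\}$ and $\partial B_4 \cap \{x_n = 0\}$, so the comparison $v \ge \tfrac12 W$ needs a justification (Perron envelope, or removability of the zero-capacity corner set); you flag the difficulty but do not quite close it. Moreover, the Harnack chain you describe is already packaged inside the standard Harnack inequality on a compact connected subdomain. If you apply the two estimates directly to $v$ --- exactly as you already do for $W$ --- the auxiliary function and the chaining become unnecessary, and the proof shortens to the paper's.
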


\begin{proof}
Without loss of generality, it is assumed that ${R_0}=1$.

By the third inequality in \eqref{EQ_supsolu}
and the H$\ddot{\mbox{o}}$lder continuity up to the boundary of $u$,
there exists constant $0<\delta\leq1$ depending only on
 $n$, $\lambda$ and $\Lambda$  such that
\begin{equation}\label{SZ-u-lager-1}
    u(x)\leq \frac{2}{3}
    \quad \mbox{on }\partial B_2\cap\{0\leq x_n\leq\delta\}.
\end{equation}
Applying Harnack inequality to $1-u$,
there exists  a positive constant $C\geq1$
depending only on  $n$, $\lambda$ and $\Lambda$ such that
\[
    C\inf\limits_{\partial B_2\cap\{x_n\geq\delta\}} (1-u)
    \geq\sup\limits_{\partial B_2\cap\{x_n\geq\delta\}} (1-u)
    \geq\sup\limits_{\partial B_2\cap\{x_n=\delta\}} (1-u)
    \geq \frac{1}{3}.
\]
It implies that
\begin{equation}\label{SZ-u-lager-2}
   u(x)\leq 1-\frac{1}{3C}   \quad \mbox{on }\partial B_2\cap\{x_n\geq\delta\}.
\end{equation}

This lemma follows immediately from \eqref{SZ-u-lager-1} and \eqref{SZ-u-lager-2} by taking $\varepsilon_0=\frac{1}{3C}$.
\end{proof}

\begin{lemma}\label{Lm_Limits_of_solutions_of_linear_Eq}
Let $R_0>0$ and $a_{ij}(x)\in C(\overline{\mathbb{R}}^n_+\backslash B_{R_0}^+)$ such that $\lambda I\leq [a_{ij}(x)]\leq \Lambda I$  in $\overline{\mathbb{R}}^n_+\backslash B_{R_0}^+$ for some $0<\lambda\leq\Lambda<\infty$. Assume that
\begin{equation*}
     \left\{
        \begin{aligned}
   & a_{ij}(x)D_{ij}u(x)=0\quad\mbox{in }\mathbb{R}^n_+\backslash B_{R_0}^+,\\
   & |u(x)|\leq1\quad\quad\quad\quad\;\mbox{on }(\partial B_{R_0}\cap\{x_n>0\})\cup\{x_n=0,|x|\geq R_0\},\\
   & u(x',0)\rightarrow 0\quad\quad\;\,\quad\mbox{as }|x'|\rightarrow\infty,\\
 &|Du(x)|\rightarrow 0\quad\quad\quad\,\mbox{as }|x|\rightarrow \infty.
        \end{aligned}
    \right.
\end{equation*}
Then $|u|\leq1$ in $\mathbb{\overline{R}}^n_+\backslash B_{R_0}^+$.
\end{lemma}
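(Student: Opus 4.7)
Plan:

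The plan is a Phragm\'en-Lindel\"of type maximum principle argument using the linear barrier $\phi_t(x) := 1 + t x_n$, which satisfies $a_{ij} D_{ij} \phi_t = 0$ since $a_{ij} D_{ij}(x_n) = 0$. The main line has three stages: (i) prove that $|u(x)| \le 1 + t x_n$ on the outer half-sphere $\partial B_R \cap \{x_n \ge 0\}$ once $R \ge R_1(t)$ for some finite $R_1(t)$; (ii) apply the classical comparison principle on the bounded annular domain $\Omega_R := B_R^+ \setminus \overline{B_{R_0}^+}$ (the two finite boundary pieces $\partial B_{R_0} \cap \{x_n > 0\}$ and $\{x_n = 0,\, R_0 \le |x'| \le R\}$ automatically satisfy $|u| \le 1 \le \phi_t$ by hypothesis); and (iii) pass $R \to \infty$ and then $t \to 0^+$ to conclude $u \le 1$. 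Applying the same argument to $-u$ then yields $u \ge -1$.

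For step (i), fix $t > 0$ small and put $\eta := t/2$. Using both decay hypotheses, choose $R_\eta \ge R_0$ such that $|Du(y)| \le \eta$ whenever $|y| \ge R_\eta$ and $|u(y', 0)| \le \eta$ whenever $|y'| \ge R_\eta$. For a point $x = (x', x_n) \in \partial B_R \cap \{x_n \ge 0\}$ with $R \ge 2R_\eta$, I split into two cases. If $|x'| \ge R_\eta$ (the equator side), the entire vertical segment from $(x', 0)$ up to $x$ lies in $\{|y| \ge R_\eta\}$; integrating $|Du|$ along it and using $|u(x', 0)| \le \eta$ gives $|u(x)| \le \eta(1 + x_n) \le 1 + t x_n$. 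If $|x'| < R_\eta$ (the pole side), a direct vertical descent from $x$ would cross the inner region $B_{R_\eta}^+$ where we have no gradient control; to circumvent this I first slide $x$ along a great-circle arc on $\partial B_R$ to a point $x^\#$ with $|x^{\#\prime}| = R_\eta$ (arc length bounded by $\pi R_\eta/2$, entirely at radius $R \ge R_\eta$), and then descend vertically from $x^\#$. This yields the bound $|u(x)| \le \eta(1 + R + \pi R_\eta/2)$. In this case $|x'| < R_\eta$ forces $x_n$ to be of order $R$, hence $1 + t x_n$ grows linearly in $R$ with slope $t$ strictly larger than $\eta = t/2$, so the desired inequality $\eta(1 + R + \pi R_\eta/2) \le 1 + t x_n$ reduces to an elementary estimate in $R$ that holds once $R$ exceeds a finite threshold $R_1(t)$ depending on $t$ and $R_\eta$.

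The main obstacle is precisely this pole-region estimate: the natural idea of descending vertically to the flat boundary where $u \to 0$ is obstructed by the inner annulus $B_{R_\eta}^+ \setminus B_{R_0}^+$ on which no gradient smallness is available. The detour along $\partial B_R$ costs an additive term of order $\eta R_\eta$, uniformly bounded in $R$ but not small. The specific choice of barrier $\phi_t = 1 + t x_n$ is adapted to this geometry: in the pole region $x_n$ is of order $R$, so $\phi_t$ is of order $tR$ and dominates the fixed detour cost as soon as $R$ is large. Both decay hypotheses are used in essential ways: the gradient decay controls the oscillation of $u$ along the sphere and along the vertical segment, while the boundary-value decay anchors $u$ to small values at the foot of the vertical descent.
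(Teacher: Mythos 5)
Your proof is correct, and it is a genuine Phragm\'en--Lindel\"of barrier argument of the same general flavor as the paper's, but the geometric realization is different and somewhat less economical. The paper compares $|u|$ with $1+2\varepsilon x_n$ on the cylinder $Q_{R_\varepsilon}^+=\{|x'|<R_\varepsilon,\ 0<x_n<R_\varepsilon\}$, and the cylinder geometry is chosen precisely so that the traversal costs match the barrier: on the lateral face, a straight vertical descent costs $\varepsilon x_n$; on the top face $\{x_n=R_\varepsilon\}$, a horizontal slide of length at most $R_\varepsilon$ followed by a vertical descent of length $R_\varepsilon$ costs $2\varepsilon R_\varepsilon = 2\varepsilon x_n$ since $x_n = R_\varepsilon$ there. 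Thus one parameter $\varepsilon$ both shrinks the gradient and is the barrier slope, and a single limit $\varepsilon\to 0$ finishes. You use the sphere $\partial B_R$ instead, and near the pole $x_n$ is no longer tied to the detour cost, which forces you to decouple the barrier slope $t$ from the gradient bound $\eta$ and to take a two-parameter limit (first $R\to\infty$ with $t$ fixed, then $t\to 0^+$). That works, and your arc-length estimate $R\arcsin(R_\eta/R)\le \tfrac{\pi}{2}R_\eta$ via convexity of $\arcsin$ is fine; it is simply a less tight fit between barrier and geometry. Two further remarks. First, you invoke the boundary decay $u(x',0)\to 0$ to anchor $|u(x',0)|\le\eta$, but this is not needed: $|u(x',0)|\le 1$ alone suffices with your choice $\eta=t/2$, since the term $1$ is already absorbed by the constant $1$ in the barrier; the paper's proof likewise does not use $u(x',0)\to 0$ at all in this lemma (that hypothesis only becomes essential in Theorem~\ref{Co_Limits_of_solutions_of_linear_Eq}). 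Second, it is worth observing that in both proofs the barrier $1+t x_n$ (resp.\ $1+2\varepsilon x_n$) is itself a solution of $a_{ij}D_{ij}=0$, so the comparison principle applies directly with no sub/supersolution sign to check.
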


\begin{proof}
For any $\varepsilon>0$, since $|Du|\rightarrow 0$ as $|x|\rightarrow\infty$,
there exists $R_\varepsilon\geq R_0$ such that
\begin{equation}\label{SZ-Du-small}
  |Du|\leq \varepsilon
  \quad \mbox{in } \mathbb{\overline{R}}^n_+\backslash Q_{R_\varepsilon}^+,
\end{equation}
where
$Q_{R_\varepsilon}^+=\{(x',x_n):|x'|<R_\varepsilon, 0<x_n<R_\varepsilon\}$.

Since $|u|\leq 1$ on $\{x_n=0,~|x|\geq R_0\}$, we have,
by \eqref{SZ-Du-small},
\[
    |u(x)|\leq 1+2\varepsilon x_n
    \quad \mbox{on } \partial Q_{R_\varepsilon}^+\cap\{x_n>0\}.
\]
In view of $|u|\leq1$
on $(\partial B_{R_0}\cap\{x_n>0\})\cup\{x_n=0,|x|\geq R_0\}$, we deduce
by the comparison principle that
\[
    |u(x)|\leq 1+2\varepsilon x_n
    \quad\mbox{in } \overline{Q}_{R_\varepsilon}^+\backslash {B}_{R_0}^+.
\]
Fix any $x\in \mathbb{\overline{R}}^n_+\backslash B_{R_0}^+$ and we derive the conclusion by letting $\varepsilon\rightarrow 0$.
\end{proof}

The following two theorems are our main results of this section and will be used in the proof of Theorem 1.1.

\begin{theorem}\label{Co_Limits_of_solutions_of_linear_Eq}
Let $R_0>0$ and $a_{ij}(x)\in C(\overline{\mathbb{R}}^n_+\backslash B_{R_0}^+)$ such that $\lambda I\leq [a_{ij}(x)]\leq \Lambda I$
 in $\overline{\mathbb{R}}^n_+\backslash B_{R_0}^+$
 for some $0<\lambda\leq\Lambda<\infty$.
Assume that
\begin{equation*}
     \left\{
        \begin{aligned}
   & a_{ij}(x)D_{ij}u(x)=0
    \quad\mbox{in }\mathbb{R}^n_+\backslash B_{R_0}^+,\\
   & |u|\leq1\quad\quad\quad\quad\quad\;\,\,\mbox{on }(\partial B_{R_0}\cap\{x_n>0\})\cup\{x_n=0,|x|\geq R_0\},\\
 & u(x',0)\rightarrow \beta\quad\quad\quad\,\,\mbox{as }|x'|\rightarrow\infty,\\
 & |Du(x)|\rightarrow 0\quad\quad\quad\,\mbox{as }|x|\rightarrow \infty
         \end{aligned}
    \right.
\end{equation*}
for some real number $\beta$. Then $u(x)\rightarrow \beta$ as $|x|\rightarrow\infty$.
\end{theorem}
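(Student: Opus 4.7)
My plan is to derive the theorem by showing separately that $\limsup_{|x|\to\infty} u(x)\leq \beta$ and $\liminf_{|x|\to\infty} u(x)\geq \beta$. As a preliminary step I rescale: the function $w=(u-\beta)/(1+|\beta|)$ solves $a_{ij}D_{ij}w=0$ in $\mathbb{R}^n_+\backslash B_{R_0}^+$, satisfies $|w|\leq 1$ on the fixed part of the boundary, has $|Dw|\to 0$ as $|x|\to\infty$, and $w(x',0)\to 0$ as $|x'|\to\infty$. Lemma \ref{Lm_Limits_of_solutions_of_linear_Eq} therefore gives $|w|\leq 1$ throughout $\overline{\mathbb{R}}^n_+\backslash B_{R_0}^+$, so in particular $u$ is bounded and the quantities $M_\infty:=\limsup_{|x|\to\infty} u(x)$ and $m_\infty:=\liminf_{|x|\to\infty} u(x)$ are finite.

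For the upper bound I argue by contradiction: assume $M_\infty>\beta$. Given any $\varepsilon\in (0,M_\infty-\beta)$ pick $R_1\geq R_0$ with $u(x',0)\leq \beta+\varepsilon$ for $|x'|\geq R_1$, and given any $\varepsilon'>0$ pick $R_2\geq R_1$ with $u(x)\leq M_\infty+\varepsilon'$ for $|x|\geq R_2$. Select $x_k\in \mathbb{R}^n_+$ with $|x_k|\to\infty$ and $u(x_k)\to M_\infty$, set $R_k=|x_k|/2$, and for $k$ large consider the half-annulus $B_{4R_k}^+\backslash B_{R_k}^+$. The affinely normalized
\[
\tilde u(x)=\frac{u(x)-(\beta+\varepsilon)}{M_\infty+\varepsilon'-\beta-\varepsilon}
\]
satisfies the same linear equation, and its boundary values obey $\tilde u\leq 1$ on the two spherical pieces and $\tilde u\leq 0\leq \tfrac12$ on the flat piece. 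Lemma \ref{Lm_Int_Str_Small} at scale $R_k$ then yields $\tilde u\leq 1-\varepsilon_0$ on $\partial B_{2R_k}\cap \{x_n\geq 0\}$. Evaluating at $x_k$, undoing the normalization, and successively sending $k\to\infty$, $\varepsilon'\to 0$, and $\varepsilon\to 0$, I recover $(M_\infty-\beta)\varepsilon_0\leq 0$, contradicting $M_\infty>\beta$.

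The lower bound $m_\infty\geq \beta$ follows from the symmetric argument applied to $-u$, with the normalization $(\beta-\varepsilon-u)/(\beta-\varepsilon-m_\infty+\varepsilon')$ taking the role of $\tilde u$; since $-u$ solves the same equation, Lemma \ref{Lm_Int_Str_Small} applies identically. I expect the main technical care to be the correct affine normalization — in particular, ensuring that the flat-boundary value is at most $\tfrac12$ after rescaling — and the ordered passage to the limits in $k$, $\varepsilon'$, and $\varepsilon$; the fact that the constant $\varepsilon_0$ in Lemma \ref{Lm_Int_Str_Small} depends only on $n$, $\lambda$, $\Lambda$ (and hence is scale-free) is exactly what allows a single contraction factor to be used along the entire sequence of scales $R_k$.
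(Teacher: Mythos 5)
Your proof is correct and follows essentially the same route as the paper's: use the flat-boundary convergence to $\beta$ to make the normalized flat-boundary value $\leq \tfrac12$, invoke Lemma~\ref{Lm_Limits_of_solutions_of_linear_Eq} to know $\limsup$ and $\liminf$ are finite, and apply Lemma~\ref{Lm_Int_Str_Small} at large scales to derive a contraction that contradicts $\limsup u>\beta$ (resp.\ $\liminf u<\beta$). The only differences are cosmetic bookkeeping: the paper first reduces to $\beta=0$, normalizes by $(1+\varepsilon_0/2)\overline{u}$ so the slack $\varepsilon_0/2$ is built into a single denominator, and concludes from $u\leq(1-\varepsilon_0/2)\overline{u}$ on \emph{all} of $\partial B_{2R}$ for every $R\geq R_1$; you instead introduce two auxiliary parameters $\varepsilon,\varepsilon'$, evaluate along a maximizing sequence $x_k$, and pass to limits in the order $k\to\infty$, $\varepsilon'\to0$, $\varepsilon\to0$. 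Both versions rely on the same two lemmas in the same way and on the scale-invariance of the constant $\varepsilon_0$.
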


\begin{proof}
Suppose that $\beta=0$. Otherwise, we consider
\[
    ({u(x)-\beta})/({1+|\beta|}).
\]

By Lemma \ref{Lm_Limits_of_solutions_of_linear_Eq},
we have $|u(x)|\leq 1$ in $\mathbb{\overline{R}}^n_+\backslash B_{R_0}^+$.
Then $u$ has finite superior limit $\overline{u}$
and inferior limit $\underline{u}$ at infinity.
By $u(x',0)\rightarrow 0$ as $|x'|\rightarrow\infty$,
we get $\overline{u}\geq 0\geq \underline{u}$.

Now we argue by contradiction.
If this theorem is not true, then
$\overline{u}>0$ or $\underline{u}<0$.
We may assume $\overline{u}>0$.
Otherwise, we consider $-u$.

Let $\varepsilon_0$ be given by Lemma \ref{Lm_Int_Str_Small}.
By the definition of $\overline{u}$,
there exists large $R_1\geq R_0$ such that
for all $R\geq R_1$,
\[
    u(x)\leq(1+\frac{\varepsilon_0}{2})\overline{u}
    \quad\mbox{in }\mathbb{\overline{R}}^n_+\backslash B_R^+
\]
and (by $u(x',0)\rightarrow\beta=0~\mbox{as}~|x'|\rightarrow\infty$,)
\[
    u(x',0)\leq \frac{1}{2}(1+\frac{\varepsilon_0}{2})\overline{u}
    \quad\mbox{on }\{x_n=0,~|x'|>R\}.
\]

Applying Lemma \ref{Lm_Int_Str_Small}
to $\frac{u(x)}{(1+\varepsilon_0/2)\overline{u}}$
in $B_{4R}^+\backslash \overline{B}_{R}^+$,
we get
\[
    u(x)\leq(1-\varepsilon_0)(1+\frac{\varepsilon_0}{2})\overline{u}
         \leq (1-\frac{\varepsilon_0}{2})\overline{u}
           \quad  \mbox{on }\partial B_{2R}\cap\{x_n\geq0\}.
\]
However, by the arbitrariness of $R\geq R_1$,
\[
    u(x) \leq(1-\frac{\varepsilon_0}{2})\overline{u}
    \quad  \mbox{in}~\overline{\mathbb{R}}^n_+\backslash B_{2R_1}^+,
\]
which contradicts the definition of $\overline{u}$.
\end{proof}

\begin{theorem}\label{TM_Asymp_Behr_of_solu_of_Linear_Eq}

Let $R_0>0$ and $a_{ij}(x)\in C(\overline{\mathbb{R}}^n_+\backslash B_{R_0}^+)$ such that $\lambda I\leq [a_{ij}(x)]\leq \Lambda I$ and $|a_{ij}(x)- \delta_{ij}|\leq |x|^{-s}$
 in $\overline{\mathbb{R}}^n_+\backslash B_{R_0}^+$
 for some $s>0$ and $0<\lambda\leq\Lambda<\infty$.
 Assume that
\begin{equation*}
     \left\{
        \begin{aligned}
              &a_{ij}(x)D_{ij}u(x)=0\quad\mbox{in }\mathbb{R}^n_+\backslash B_{R_0}^+, \\
              &u(x)=0\quad\quad\quad\quad\;\;\,\mbox{on }\{x_n=0,|x|\geq R_0\},\\
              &|u|\leq1\quad\quad\quad\quad\;\,\quad\mbox{on }\partial B_{R_0}\cap\{x_n>0\},\\
&|Du(x)|\leq 1\quad\quad\quad\;\,\mbox{in }\mathbb{\overline{R}}^n_+\backslash B_{R_0}^+,\\
&|Du(x)|\rightarrow 0\quad\quad\quad\,\mbox{as }|x|\rightarrow \infty.
                        \end{aligned}
     \right.
\end{equation*}
Then we have
\begin{equation}\label{SZ_AsB_Lin}
    |u(x)|\leq C\frac{x_n}{|x|^n}
    \quad\mbox{in }\mathbb{\overline{R}}^n_+\backslash B_R^+,
\end{equation}
where $C$ and $R\geq R_0$
depend only on $n$, $s$ and $R_0$.
\end{theorem}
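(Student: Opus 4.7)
The plan has three steps. First, apply Theorem 3.3 to deduce $u(x)\to 0$ as $|x|\to\infty$: the asymptotic boundary value $\beta$ equals $0$, since $u(x',0)=0$ for $|x'|\geq R_0$, and the remaining hypotheses of Theorem 3.3 are direct consequences of the hypotheses here. Writing $M(r):=\sup_{|x|\geq r}|u|$, one then has $M(r)\to 0$. Coupled with the boundary Lipschitz estimate for uniformly elliptic equations (which uses that $u$ vanishes on the flat portion of $\partial\mathbb{R}^n_+$), a standard rescaling at scale $r$ yields $|u(x)|\leq C M(r) x_n/r$ for $|x|\in[r,2r]$ when $r\geq 2R_0$.

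The core step is the barrier construction. The natural candidate is the half-space Poisson kernel $\Phi(x):=x_n/|x|^n$, which is Laplace-harmonic on $\mathbb{R}^n_+\setminus\{0\}$ and vanishes on $\{x_n=0,\,x\neq 0\}$. A direct computation gives
\[
L\Phi := a_{ij}D_{ij}\Phi = (a_{ij}-\delta_{ij})D_{ij}\Phi = O(|x|^{-n-1-s}),
\]
so $\Phi$ is only an approximate $L$-supersolution. To upgrade it, I would introduce a correction $\Psi$ with $L\Psi\leq -c|x|^{-n-1-s}$ and $\Psi=o(\Phi)$ at infinity. Natural building blocks are $\Psi_\delta(x):=x_n|x|^{-n+\delta}$, which is Laplace-superharmonic for $\delta\in(0,n)$ since $\Delta(x_n r^{-n+\delta})=-(n-\delta)\delta\,x_n r^{-n+\delta-2}<0$, together with (when $n\geq 3$) a radial piece $|x|^{-\alpha}$ for $\alpha\in(0,n-2)$. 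Taking $W=K(\Phi+\eta\Psi)$ with $K$ sized via the boundary Lipschitz estimate so that $W\geq|u|$ on $\partial B_R^+$ (including the delicate region near the corner $x_n=0$, where both $\Phi$ and $|u|$ vanish linearly in $x_n$), and noting that $W=0=u$ on $\{x_n=0,\,|x|>R\}$ and that both tend to $0$ at infinity, the comparison argument underlying Lemma 3.2 yields $\pm u\leq W\leq C x_n/|x|^n$ on $\mathbb{R}^n_+\setminus\overline{B_R^+}$.

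The main obstacle is that a one-step application of this barrier seems to stall at a \emph{suboptimal} rate: $L\Phi$ has no $x_n$ factor, whereas the Laplacian of the correction $\Psi_\delta$ does, so absorbing the error $O(r^{-n-1-s})$ near $\{x_n=0\}$ is delicate. My expected remedy is an iteration: first prove a weaker rate $|u(x)|\leq C|x|^{-\alpha}$ for some $\alpha>0$ via radial (for $n\geq 3$) or analogous (for $n=2$) superharmonic barriers and the comparison principle, then bootstrap via the boundary Lipschitz estimate—which converts interior decay $|x|^{-\alpha}$ into a boundary bound $\leq C x_n|x|^{-\alpha-1}$—and feed the sharper inner boundary data on $\partial B_R^+$ back into the $\Phi$-based barrier argument above. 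In finitely many iterations this scheme should reach the sharp rate $|u(x)|\leq C x_n/|x|^n$ asserted in \eqref{SZ_AsB_Lin}.
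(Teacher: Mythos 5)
Your Step 1 (invoke Theorem~\ref{Co_Limits_of_solutions_of_linear_Eq} to get $u(x)\to 0$) agrees with the paper, and your choice of $\Phi(x)=x_n/|x|^n$ as the leading barrier is also the right one. But the correction you propose does not close the argument, and your fallback iteration is not worked out, so there is a genuine gap.

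The difficulty you identify is real and is precisely where your barrier fails. The mixed second derivatives $D_{in}\Phi$ ($i\neq n$) contain a term $-nx_i/|x|^{n+2}$ with \emph{no} $x_n$ factor, so the error $(a_{ij}-\delta_{ij})D_{ij}\Phi$ is of size $|x|^{-n-1-s}$ uniformly down to $\{x_n=0\}$. Your correction $\Psi_\delta(x)=x_n|x|^{-n+\delta}$ has $\Delta\Psi_\delta=-\delta(n-\delta)\,x_n|x|^{-n+\delta-2}$, which vanishes linearly as $x_n\to 0$ and therefore \emph{cannot} dominate that error in a neighborhood of the flat boundary, no matter how $\delta$ or the multiplicative constant is chosen. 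The auxiliary radial piece $|x|^{-\alpha}$ is also unusable here: it does not vanish on $\{x_n=0\}$, so any barrier containing it is bounded below by a positive multiple of $|x|^{-\alpha}$ near the boundary and can only yield $|u|\le C|x|^{-\alpha}$, not the claimed $Cx_n/|x|^n$. The bootstrap you sketch is not enough on its own either, because the size of the error $(a_{ij}-\delta_{ij})D_{ij}\Phi$ near $\{x_n=0\}$ depends only on the coefficients, not on how small $u$ already is, so improving the a priori bound on $u$ on inner spheres does not by itself remove the obstruction to absorbing that error into the supersolution inequality.

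The missing idea is to take the correction to be a \emph{superlinear power of the Poisson kernel itself}: the paper sets
\[
w(x)=\frac{x_n}{|x|^n}-\Big(\frac{x_n}{|x|^n}\Big)^{1+\delta},\qquad 0<\delta<\min\Big\{1,\frac{s}{n-1}\Big\}.
\]
Because $\Phi^{1+\delta}$ vanishes like $x_n^{1+\delta}$ near $\{x_n=0\}$ (faster than $\Phi$), $w$ is still comparable to $\Phi$. But
\[
\Delta w=-\delta(1+\delta)\Big(\frac{x_n}{|x|^n}\Big)^{\delta-1}\Big(\frac{1}{|x|^{2n}}+\frac{(n^2-2n)x_n^2}{|x|^{2n+2}}\Big)\le-\delta(1+\delta)\frac{1}{|x|^{2n}}\Big(\frac{x_n}{|x|^n}\Big)^{\delta-1},
\]
and the factor $(x_n/|x|^n)^{\delta-1}$ \emph{blows up} as $x_n\to 0$. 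Using $(x_n/|x|^n)^{\delta-1}\ge|x|^{(n-1)(1-\delta)}$ and the condition $(n-1)\delta<s$, this singular negative term overwhelms both $C|x|^{-s-n-1}$ and $C|x|^{-s-2n}(x_n/|x|^n)^{\delta-1}$ once $|x|$ is large, giving $a_{ij}D_{ij}w\le 0$ in one step, uniformly up to the boundary. Then, since $|Du|\le 1$ and $u=0$ on the flat boundary, $|u|\le Cx_n\le C' w$ on the inner sphere, $u\to 0$ at infinity by Theorem~\ref{Co_Limits_of_solutions_of_linear_Eq}, and the comparison principle in annular regions finishes the proof with no iteration needed. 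So your strategy is a near miss: replace $x_n|x|^{-n+\delta}$ by $(x_n/|x|^n)^{1+\delta}$ and the rest of your plan goes through.
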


\begin{proof}
 By Theorem \ref{Co_Limits_of_solutions_of_linear_Eq},
\[
    u(x)\rightarrow0\quad \mbox{as } |x|\rightarrow\infty.
\]

Fix $0<\delta <\min\{1,\frac{s}{n-1}\}$ and let
\[
    {w}(x)=\frac{x_n}{|x|^n}-\left(\frac{x_n}{|x|^n}\right)^{1+\delta}.
\]
Then for all $1\leq i$, $j\leq n$,
\[
    D_iw=\left(1-(1+\delta)\left(\frac{x_n}{|x|^n}\right)^\delta\right)
    \left(\frac{\delta_n^i}{|x|^n}-\frac{nx_nx_i}{|x|^{n+2}}\right)
\]
and
\begin{align*}
  D_{ij}w=
   &\left(1-(1+\delta)\left(\frac{x_n}{|x|^n}\right)^\delta\right)
        \left(\frac{-n(\delta_n^ix_j+\delta_n^jx_i+\delta_j^ix_n)}{|x|^{n+2}}
         +\frac{n(n+2)x_nx_ix_j}{|x|^{n+4}}\right)\\
   &+\left(-\delta(1+\delta)\left(\frac{x_n}{|x|^n}\right)^{\delta-1}\right)
        \left(\frac{\delta_n^i}{|x|^n}-\frac{nx_nx_i}{|x|^{n+2}}\right)
        \left(\frac{\delta_n^j}{|x|^n}-\frac{nx_nx_j}{|x|^{n+2}}\right).
\end{align*}
Consequently
\begin{equation}\label{SZ-Lap-w}
  \begin{split}
  \Delta w=&-\delta(1+\delta)\left(\frac{x_n}{|x|^n}\right)^{\delta-1}
             \left(\frac{1}{|x|^{2n}}+\frac{(n^2-2n) x_n^2}{|x|^{2n+2}}\right)\\
          \leq &-\delta(1+\delta)\frac{1}{|x|^{2n}}\left(\frac{x_n}{|x|^n}\right)^{\delta-1}
  \end{split}
\end{equation}
and there exists $C$ depending only on $n$ and $\delta$
such that for all $1\leq i$, $j\leq n$,
\begin{equation}\label{SZ-Dij-w}
  \begin{split}
  |D_{ij}w|&\leq C\left( \frac{1}{|x|^{n+1}}+ \frac{1}{|x|^{n+1}}\left(\frac{x_n}{|x|^n}\right)^{\delta}+
         \frac{1}{|x|^{2n}}\left(\frac{x_n}{|x|^n}\right)^{\delta-1}\right)\\
       &\leq C\left( \frac{1}{|x|^{n+1}}+\frac{1}{|x|^{2n}}\left(\frac{x_n}{|x|^n}\right)^{\delta-1}\right).
  \end{split}
\end{equation}

By \eqref{SZ-Lap-w}, \eqref{SZ-Dij-w}
and $|a_{ij}(x)- \delta_{ij}|\leq |x|^{-s},$
there exists $C$ depending only on $n$, $\delta$ and $s$
such that
\begin{equation*}%\label{SZ-aijw}
  \begin{split}
     a_{ij}(x)D_{ij}w(x)
    &= \delta_{ij} D_{ij}w(x)+(a_{ij}(x)- \delta_{ij})D_{ij}w(x)\\
    &\leq -\delta(1+\delta)\frac{1}{|x|^{2n}}\left(\frac{x_n}{|x|^n}\right)^{\delta-1}\\
    &\quad +C |x|^{-s}\left( \frac{1}{|x|^{n+1}}+\frac{1}{|x|^{2n}}
           \left(\frac{x_n}{|x|^n}\right)^{\delta-1}\right)\\
    &\leq\left(-\delta(1+\delta)+C |x|^{-s}\right)|x|^{-2n}\left(\frac{x_n}{|x|^n}\right)^{\delta-1}
           +C |x|^{-s-n-1}.
  \end{split}
\end{equation*}
It follows that there exists $R_1\geq 2R_0$ large enough
(depending only on $n$, $\delta$, $s$ and $R_0$) such that
\begin{equation}\label{SZ_W_supsolution}
    a_{ij}(x)D_{ij}w(x)\leq 0
    \quad\mbox{in } \mathbb{R}^n_+\backslash \overline{B}^+_{R_1},
\end{equation}
where
$
    0<\delta <\min\{1,\frac{s}{n-1}\}
$
and
$\left(\frac{x_n}{|x|^n}\right)^{\delta-1}\geq |x|^{-(n-1)(\delta-1)}$ are used.

From $|Du(x)|\leq 1$ in $\mathbb{R}^n_+\backslash B_{R_0}^+$ and $u(x)=0~\mbox{on}~\{x_n=0,|x|\geq R_0\}$,
we deduce
\[
|u(x)|\leq 3 x_n\quad \mbox{on }\partial{B}_{R_1}\cap\{x_n\geq0\}.
\]
On the other hand, on $\partial{B}_{R_1}\cap\{x_n\geq0\}$,
\begin{equation}\label{SZ-u-less-w-1}
 \begin{split}
   &w(x)=\frac{x_n}{|x|^n}\left(1-\left(\frac{x_n}{|x|^n}\right)^{\delta}\right)
   \geq\frac{x_n}{|x|^n}\left(1-\left(\frac{1}{|x|^{n-1}}\right)^{\delta}\right)\\[10pt]
  &\mbox{}\hskip0.5cm =\frac{x_n}{R_1^n}\left(1-R_1^{(1-n)\delta}\right)\geq C^{-1}|u(x)|,
 \end{split}
\end{equation}
where $C=\frac{3R_1^n}{1-R_1^{(1-n)\delta}}$.

For any $\epsilon>0$, by $u(x)\rightarrow 0$ as $|x|\rightarrow\infty$,
there exists $R_{\epsilon}>R$ such that
\begin{equation*}%\label{SZ-u-less-w-2}
   |u(x)|\leq \epsilon,\quad x\in\partial B_{R_{\epsilon}}\cap\{x_n\geq0\}.
\end{equation*}
Combine it with \eqref{SZ-u-less-w-1} and \eqref{SZ_W_supsolution} and
by the comparison principle, we have
\[
    |u(x)|\leq C{w}(x)+\epsilon
    \quad\mbox{in } \overline{B}^+_{R_{\epsilon}}\backslash B^+_{R}.
\]
Let $\epsilon \rightarrow 0$ and we conclude (\ref{SZ_AsB_Lin}).
\end{proof}

\section{Proof of Theorem \ref{TM_Main_TM}}

In this section we prove Theorem \ref{TM_Main_TM}, which is
equivalent to the following:

\begin{theorem}\label{CoT2infty}
Assume that $f\geq0$ satisfying
\begin{equation}\label{SZ_supp_f-1_is_Bounded_in_B1}
    \Omega_0=\mbox{support} (f-1)\subset B_1^+
\end{equation}
and u is a convex viscosity solution of
\begin{equation}\label{EQ_det=f_u=|x'|^2/2}
    \left\{
        \begin{aligned}
        &\det D^2u=f\quad\mbox{in } \mathbb{R}^n_+,\\
        &u=\frac{1}{2}|x|^2\quad\quad\mbox{on }  \{x_n=0\} \\
        \end{aligned}
    \right.
\end{equation}
satisfying
\begin{equation}\label{SZ_u_is_quadra_increasing_outside_B^+_1}
   \mu|x|^2\leq u\leq \mu^{-1}|x|^2
   \quad\mbox{in } \mathbb{\overline{R}}^n_+\backslash B_{1}^+
\end{equation}
for some $0<\mu\leq\frac12$.
Then there exist some symmetric positive definite matrix $A$
with $\det A=1$ and constant $b_n\in\mathbb{R}$
such that
\begin{equation}\label{SZ_TM_Asymp_behr_of_u}
    \left|u(x)-\left(\frac{1}{2}x^T Ax+b_nx_n\right)\right|
    \leq C\frac{x_n}{|x|^n}
   \quad\mbox{in } \mathbb{\overline{R}}^n_+\backslash B_{R}^+,
\end{equation}
where $C$ and $R$ depend only on $n$ and $\mu$.
Moreover, $u\in C^{\infty}(\overline{\mathbb{R}}^n_+\backslash{\Omega_0})$
and
\begin{equation}\label{SZ_TM_Asymp_behr_of_D^ku}
    |x|^{n-1+k}
     \left|D^k\left(u(x)-\frac{1}{2}x^T Ax-b_nx_n\right)\right|\leq C
   \quad\mbox{in } \mathbb{\overline{R}}^n_+\backslash B_{R}^+,
\end{equation}
where $C$ also depends on $k$.
\end{theorem}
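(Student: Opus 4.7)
The plan follows the Caffarelli--Li strategy from \cite{Caffarelli-Liyanyan-2003-CommPureApplMath}, adapted to the half-space and powered by the half-domain Pogorelov estimate of Section 2 together with the linear asymptotic theory of Section 3. I will: (i) identify the asymptotic quadratic polynomial $P$ via a blow-down, invoking the Savin--Mooney Liouville theorem (Corollary \ref{Co_f=1}); (ii) reduce to $A = I_n$ by a volume-preserving shear and select the vertical linear coefficient $b_n$; (iii) linearize $\det(I_n + D^2 w) = 1$ and combine Corollary \ref{LM_D^kv_est} with Theorem \ref{TM_Asymp_Behr_of_solu_of_Linear_Eq} to extract the sharp decay.

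\textbf{Blow-down and identification of $A$.} Set $u_k(x) := k^{-2} u(kx)$ for $k \geq 1$. Each $u_k$ is convex, satisfies $\det D^2 u_k = f(k\,\cdot\,) \to 1$, has boundary value $\tfrac12 |x'|^2$ on $\{x_n = 0\}$, and inherits the pinch $\mu|x|^2 \leq u_k \leq \mu^{-1}|x|^2$ outside $B_{1/k}^+$. Theorem \ref{TM_Pogorelov_estimates_in_half_domain} applied to appropriately normalized sections, together with interior Pogorelov and Schauder (as packaged in Corollary \ref{LM_D^kv_est}), gives uniform $C^{\ell}_{\mathrm{loc}}(\overline{\mathbb{R}}^n_+ \setminus \{0\})$ control on $\{u_k\}$. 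Any subsequential limit $u_\infty$ is a convex viscosity solution of $\det D^2 u_\infty = 1$ in $\mathbb{R}^n_+$ with $u_\infty(x',0) = \tfrac12 |x'|^2$ and the same quadratic pinch; hence by Corollary \ref{Co_f=1} it is a quadratic polynomial. The boundary data then force $u_\infty(x) = \tfrac12 x^T A x$ with $A > 0$, $\det A = 1$, and with the upper-left $(n-1)\times(n-1)$ block of $A$ equal to $I_{n-1}$. Uniqueness of $A$ across subsequences follows from the same reasoning, so $u_k \to \tfrac12 x^T A x$ in $C^{\ell}_{\mathrm{loc}}$.

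\textbf{Normalization, linearization, and coefficient decay.} The shear $T(x) = (x' + \alpha x_n, x_n)$, with $\alpha \in \mathbb{R}^{n-1}$ determined by the off-diagonal row of $A$, preserves $\{x_n = 0\}$ and the boundary condition, has $\det T = 1$, and sends $\tfrac12 x^T A x$ to $\tfrac12 |y|^2$; after replacing $u$ by $u \circ T^{-1}$ we may assume $A = I_n$. The derivative convergence from Corollary \ref{LM_D^kv_est} ensures that $D_n u(0', t) - t$ has a finite limit as $t \to \infty$, which I call $b_n$, and I set
\begin{equation*}
   w(x) := u(x) - \tfrac12 |x|^2 - b_n x_n .
\end{equation*}
Then $w \equiv 0$ on $\{x_n = 0,\; |x| \geq 1\}$, and the identity $\log\det(I_n + D^2 w) - \log\det I_n = 0$ outside $B_1^+$ gives
\begin{equation*}
   a_{ij}(x)\, D_{ij} w(x) = 0, \qquad a_{ij}(x) = \int_0^1 [I_n + s D^2 w(x)]^{-1}_{ij}\, ds .
\end{equation*}
The blow-down supplies the qualitative decay $|D^2 w(x)| \to 0$. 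Bootstrapping via Corollary \ref{LM_D^kv_est} (applied with $\gamma = \varepsilon_0 - 2 > -2$) once a polynomial bound $|w(x)| \leq C|x|^{2 - \varepsilon_0}$ is available yields $|D^2 w(x)| \leq C|x|^{-\varepsilon_0}$, and hence the quantitative estimate $|a_{ij}(x) - \delta_{ij}| \leq C|x|^{-\varepsilon_0}$ for some $\varepsilon_0 > 0$.

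\textbf{Sharp rate, higher derivatives, and main obstacle.} With $|a_{ij} - \delta_{ij}| \leq C|x|^{-\varepsilon_0}$ and $|Dw| \leq 1$, $|Dw| \to 0$ at infinity, Theorem \ref{TM_Asymp_Behr_of_solu_of_Linear_Eq} applies (after a harmless rescaling that normalizes $w$ on $\partial B_R^+$) and yields $|w(x)| \leq C x_n / |x|^n$ outside some $B_R^+$, which is \eqref{SZ_TM_Asymp_behr_of_u} once the shear $T$ is undone. Then one final application of Corollary \ref{LM_D^kv_est} with $\gamma = n - 1$ furnishes \eqref{SZ_TM_Asymp_behr_of_D^ku}. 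The main obstacle is precisely the step that bridges the qualitative blow-down information $w(x)/|x|^2 \to 0$ to a quantitative polynomial bound $|w(x)| \leq C|x|^{2 - \varepsilon_0}$ needed to trigger Corollary \ref{LM_D^kv_est}; I expect to handle this by a contradiction-compactness argument on dyadic annuli, where failure of polynomial decay produces, in the limit, a nontrivial convex solution of the linearized Monge--Amp\`ere equation on $\overline{\mathbb{R}}^n_+$ with zero boundary data and strictly subquadratic growth, which the half-space Liouville theorem (Corollary \ref{Co_f=1}) rules out.
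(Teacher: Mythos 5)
Your linear-approach endgame (linearize, apply Corollary \ref{LM_D^kv_est} and Theorem \ref{TM_Asymp_Behr_of_solu_of_Linear_Eq}, then bootstrap) matches the paper's Section 4.2, and the shear reduction to $A=I_n$ is sound. However, the first half of your proposal has two genuine problems.

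First, invoking Corollary \ref{Co_f=1} is circular within the logical structure of this paper: Corollary \ref{Co_f=1} is proved in Section 5 as a consequence of Theorem \ref{TM_Main_TM}, which by Remark 4.2 is equivalent to the very statement you are proving. You would need to cite an independent proof of that Liouville theorem (e.g.\ the one in \cite{Savin-2014-JDifferentialEquations}), and the paper's own route does not rely on it at all. Second, and more seriously, the step you yourself flag as the ``main obstacle'' --- upgrading the qualitative blow-down information $w(x)/|x|^2\to 0$ to the quantitative bound $|w(x)|\leq C|x|^{2-\varepsilon_0}$ --- is exactly where the substance of the theorem lies, and your sketch of a contradiction-compactness argument would not produce what you claim. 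The putative limit object is a solution of a \emph{linearized} (linear, divergence-free) equation, not of $\det D^2 u=1$; it is a difference of two quadratics-plus-errors and hence has no reason to be convex; and the boundary data of the rescaled errors need not vanish in the limit without further work. Corollary \ref{Co_f=1} classifies convex solutions of the nonlinear equation and says nothing about sublinear-growth solutions of a uniformly elliptic linear PDE, so it cannot deliver the contradiction. The paper instead obtains the polynomial rate $|x|^{2-\tau}$ directly and quantitatively, via the dyadic section analysis of Lemmas \ref{Lm_u^hat-Xi_small}--\ref{Lm_T_is_upper_triangular}: comparing $\widehat u$ on rescaled sections with the auxiliary Monge--Amp\`ere solution $\xi$, controlling $\xi$ by the half-domain Pogorelov estimate, showing the normalizing matrices $T_k$ form a Cauchy sequence with explicit rate $2^{-\tau k/2}$, and reading off the decay $|v-\tfrac12|x|^2|\leq C|x|^{2-\tau}$. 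That quantitative comparison argument is the missing ingredient your proposal would need to supply.

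A smaller but real issue: your blow-down compactness step appeals to Corollary \ref{LM_D^kv_est} for uniform $C^\ell$ bounds, but that corollary presupposes the decay hypothesis $|u|\leq\beta|x|^{-\gamma}$ for the difference from a quadratic, which is not available before the polynomial-rate step; what is available at that stage is only Theorem \ref{TM_Pogorelov_estimates_in_half_domain} applied section-by-section, and stitching those into locally uniform higher-order bounds on annuli is itself nontrivial and not spelled out.
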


\begin{remark}
Theorem \ref{TM_Main_TM} follows easily from Theorem \ref{CoT2infty}. Indeed,
after subtracting an affine function, we may assume that in (\ref{SZ_Main_TM_Equation}), $p(x)$ is homogeneous of degree 2, that is, $p(x)=\frac{1}{2}{x}^T P x$ for some $n\times n$ symmetric positive definite matrix $P=\begin{bmatrix}\widetilde{P}&\nu\\\nu^T&a\end{bmatrix}$, where $\widetilde{P}\in \mathbb{R}^{(n-1)\times(n-1)}$, $\nu\in\mathbb{R}^{n-1}$ and $a\in\mathbb{R}$. Let $\widetilde{Q}\in\mathbb{R}^{(n-1)\times(n-1)}$ such that $\widetilde{Q}^T\widetilde{P}\widetilde{Q}=I_{n-1}$ and $Q=\begin{bmatrix}\widetilde{Q}&0\\0&\frac1{\det\widetilde{Q}}\end{bmatrix}$.
It is easy to see that $\det Q=1$ and $p(Qx)\Big|_{x_n=0}=\frac12|x'|^2$. Then we can deduce Theorem 1.1 by Theorem 4.1.
$\hfill\Box$
\end{remark}

We first show the smoothness of $u$.

\begin{lemma}\label{Lm_u_is_smooth_outside}
    Let $u$ be given by Theorem \ref{CoT2infty}.
    Then $u\in C^{\infty}(\overline{\mathbb{R}}^n_+\backslash{\Omega_0})$.
\end{lemma}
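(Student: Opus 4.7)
The goal is to show $u\in C^\infty$ in a neighborhood of any $x_0\in\overline{\mathbb{R}}^n_+\setminus\Omega_0$, where $u$ satisfies $\det D^2u=1$ in the viscosity sense. The plan is to split into two cases --- (i) $x_0\in\mathbb{R}^n_+$ and (ii) $x_0\in\{x_n=0\}$ --- and in each case combine a Pogorelov-type a priori estimate with a Schauder bootstrap on the linearized equation. In case (i) I would invoke Caffarelli's strict convexity plus the interior Pogorelov estimate; in case (ii) I would reduce to Theorem~\ref{TM_Pogorelov_estimates_in_half_domain} after a translation, an affine subtraction, and a rescaling that normalizes a section.

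For (i), pick $r>0$ with $B_r(x_0)\Subset\mathbb{R}^n_+\setminus\Omega_0$. The growth $\mu|x|^2\le u\le\mu^{-1}|x|^2$ outside $B_1^+$ forces every sublevel set of $u$ to be bounded, and together with $\det D^2u\equiv1$ on $B_r(x_0)$ Caffarelli's theorem yields that $u$ is strictly convex at $x_0$ (any contact segment with a supporting hyperplane could extend neither to infinity, by quadratic growth, nor to $\{x_n=0\}$, since the boundary trace $\tfrac12|x|^2$ is strictly convex). Hence a small interior section $S_h(x_0)=\{u<\ell_{x_0}+h\}$, with $\ell_{x_0}$ a supporting affine function, is compactly contained in $B_r(x_0)$, and the classical interior Pogorelov estimate gives $u\in C^{3,1}$ on a slightly smaller section.

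For (ii), write $x_0=(x_0',0)$. Translating $x_0$ to the origin and subtracting the affine function $x_0'\cdot x'+\tfrac12|x_0'|^2$ makes the boundary trace equal to $\tfrac12|x'|^2$ while preserving $\det D^2u=1$. For small $h>0$ the convex set $\Omega_h=\{u-\ell_0<h\}\cap\overline{\mathbb{R}}^n_+$ is bounded (by the tangential growth of $\tfrac12|x'|^2$ on the flat part and by the quadratic growth in the interior) and contains a half-ball around $0$. An affine transformation preserving $\{x_n=0\}$ then rescales $\Omega_h$ to a domain $\tilde\Omega$ with $B_{\rho_1}^+\subset\tilde\Omega\subset B_{\rho_1^{-1}}^+$, whose flat-boundary data is a positive definite quadratic polynomial in $x'$ and whose curved-boundary data lies between two positive constants. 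Theorem~\ref{TM_Pogorelov_estimates_in_half_domain} then yields $u\in C^{3,1}$ in a half-neighborhood of $x_0$ in $\overline{\mathbb{R}}^n_+$.

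In either case, once $u\in C^{3,1}$ and $D^2u$ is uniformly elliptic near $x_0$, differentiating $\log\det D^2u=0$ shows that each first derivative $\partial_ku$ solves the linear uniformly elliptic equation $a^{ij}(x)\,\partial_{ij}(\partial_ku)=0$ with $a^{ij}=\{(D^2u)^{-1}\}^{ij}$ and, in the boundary case, smooth Dirichlet data inherited from $u=\tfrac12|x|^2$. Iterating the Schauder estimates in the order of differentiation upgrades $u$ to $C^\infty$ in a neighborhood of $x_0$ in $\overline{\mathbb{R}}^n_+$. The main obstacle I expect is the verification of the normalization hypotheses of Theorem~\ref{TM_Pogorelov_estimates_in_half_domain} at a boundary point, which ultimately rests on strict convexity of $u$ up to $\{x_n=0\}$; the strict convexity of the tangential quadratic $\tfrac12|x'|^2$ together with the global growth condition are exactly what makes this step go through.
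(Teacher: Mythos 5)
Your proposal follows the same two-case strategy as the paper: at boundary points, normalize by an affine subtraction and invoke Theorem~\ref{TM_Pogorelov_estimates_in_half_domain}; at interior points, establish strict convexity via Caffarelli's localization theorem and apply the interior Pogorelov estimate; in both cases bootstrap to $C^\infty$ via Schauder. This is precisely the structure of the paper's proof.

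One detail in the boundary case is worth making explicit. After subtracting the tangential affine function $x_0'\cdot x'+\tfrac12|x_0'|^2$, the normal derivative of the normalized function at $x_0$ may be negative or zero, so the function need not be nonnegative in $\overline{\mathbb{R}}^n_+$ and the sublevel set $\{\tilde u<h\}$ may have its minimum in the interior; the section then need not be comparable to a half-ball centered at $x_0$, which is what one wants when feeding it into Theorem~\ref{TM_Pogorelov_estimates_in_half_domain}. The paper sidesteps this by further adding $Cx_n$ with $C$ large: then $\tilde u(x_0)=0$ and $D\tilde u(x_0)$ points strictly into $\{x_n>0\}$, so by convexity $\tilde u\geq (C+\partial_n u(x_0))x_n\ge 0$ on $\overline{\mathbb{R}}^n_+$, $\tilde u>0$ on the curved boundary, and the half-ball $B_d^+(x_0)$ directly satisfies the hypotheses of Theorem~\ref{TM_Pogorelov_estimates_in_half_domain} after a single scalar rescaling. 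Your phrase "an affine transformation preserving $\{x_n=0\}$" gestures at the same normalization, but writing out the $Cx_n$ step removes the ambiguity; with that, your argument matches the paper's.
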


\begin{proof}
For any $x_0\in \{x_n=0\}\backslash{\Omega_0}$,
let $d=dist(x_0,\Omega_0)$ and
 \[
     \widetilde{u}(x)=u(x)-u(x_0)-x_0\cdot(x-x_0)+Cx_n,
 \]
where $C$ is large such that $\widetilde{u}$ is positive
on $\partial B_d(x_0)\cap\{x_n\geq0\}$.
By Theorem \ref{TM_Pogorelov_estimates_in_half_domain} and the Schauder estimates, we get
$u\in C^\infty(\overline{B}^+_{c}(x_0))$ for some positive $c<d$.

For any $x_0\in \{x_n>0\}\backslash{\Omega_0}$,
let $\widetilde{d}=dist(x_0,\partial(\mathbb{R}^n_+\backslash{\Omega_0}))$
and $l_{x_0}(x)$ be a support plane of $u$ at $x_0$.
Then
 %\begin{equation}\label{SZ_ALphaPositive1}
     $\alpha:=\min\limits_{\partial B_{\widetilde{d}/2}(x_0)}(u-l_{x_0})(x)>0$.
 %\end{equation}
Indeed, if $\alpha=0$, then
by \cite[Theorem 1]{Caffarelli-1990-AnnofMath-Aloca},
there exists an endless line $L\subset\{u-l_{x_0}=0\}$ and this
contradicts \eqref{SZ_u_is_quadra_increasing_outside_B^+_1}.
Therefore $\{x:u<l_{x_0}+\alpha\}\subset B_{\widetilde{d}/2}(x_0)$ and then
we get $u\in C^{\infty}$ in $\{x:u<l_{x_0}+\alpha\}$ by \cite{Caffarelli-Liyanyan-2003-CommPureApplMath, Cheng-Yau-1977-Comm.PureAppl.Math.}.
\end{proof}

In the following, the remaining part of the proof of Theorem \ref{CoT2infty} includes two stages:
nonlinear approach and linear approach.
%step 1
In the nonlinear approach, we follow the idea of \cite{Caffarelli-Liyanyan-2003-CommPureApplMath} and show that there exist some matrix $T$ and constant $\epsilon>0$ such that $|u(Tx)-\frac{1}{2}|x|^2|= O(|x|^{2-\epsilon})$ at infinity by approximating $u$ with a good function $\xi$ which can be estimated by Pogorelov estimate in half domain.
%step 2
In the linear approach, we obtain some linear function $l(x)$ such that
$|u(Tx)-\frac{1}{2}|x|^2-l(x)|=O\left(\frac{x_n}{|x|^n}\right)$ at infinity by linearizing the equation and using the results we established in Section 2 and Section 3.

\subsection{Nonlinear approach}

Denote the cross section by
\[
 S_M(u)=\{x\in\mathbb{\overline{R}}^n_+:u(x)<M\}.
\]

By \eqref{SZ_u_is_quadra_increasing_outside_B^+_1},
for any $M\geq\mu^{-1}$,
\begin{equation*}%\label{SZ_SMU}
    M^{1/2}\mu^{1/2}\overline{B}^+_1\subset
               S_M(u)
    \subset M^{1/2}\mu^{-1/2}\overline{B}^+_1.
\end{equation*}

Let
\begin{equation*}%\label{SZ-uhat-eq-u-}
  \widehat{u}(x)=\frac{1}{M}u(M^{1/2}x),
  \quad x\in \mathcal{O}:=\frac{1}{M^{1/2}}S_M(u).
\end{equation*}
Then
\begin{equation*}%\label{SZ_O_balls}
   \mu^{1/2}\overline{B}_1^{+}\subset
             \mathcal{O}
   \subset \mu^{-1/2}\overline{B}_1^{+}
\end{equation*}
and
\begin{equation*}
\left\{
\begin{aligned}
    &\det D^2\widehat{u}=f(M^{1/2}x)
    \quad\mbox{in } \mathcal{O},\\
    &\widehat{u}=\frac{1}{2}|x|^2
       \quad\quad\quad\quad\quad\;\;\,
       \mbox{on }\partial\mathcal{O}\cap\{x_n=0\},\\
    &\widehat{u}=1
       \quad\quad\quad\quad\quad\quad\quad\;\,\,
       \mbox{on }\partial\mathcal{O}\cap\{x_n>0\}.
\end{aligned}
\right.
\end{equation*}

Now we consider the following Dirichlet problem
\begin{equation*}
  \left\{
    \begin{aligned}
      &\det D^2\xi=1
      \quad\mbox{in }\mathcal{O},\\
      &\xi=\frac{1}{2}|x|^2
      \quad\;\;\;\,\mbox{on } \partial\mathcal{O}\cap\{x_n=0\},\\
      &\xi=1\quad
      \quad\quad\;\;\;\mbox{on } \partial\mathcal{O}\cap\{x_n>0\}.
    \end{aligned}
  \right.
\end{equation*}
We refer to \cite{Aleksandrov-1958-VestnikLeningrad, Bakel'man-1957-DoklAkadNauk, Cheng-Yau-1977-Comm.PureAppl.Math.} for the existence of $\xi$. Observe
 the boundary value of $\xi$ can be extended  to a convex function on $\overline{\mathcal{O}}$ by defining
 \[
    \sup\big\{l(x):~l ~\mbox{is an affine function and}~ l\leq \xi|_{\partial\mathcal{O}} \mbox{ on }\partial\mathcal{O}\big\},
    \quad \forall x\in \mathcal{O}.
\]

For any $M\geq\mu^{-1}$,
applying Theorem \ref{TM_Pogorelov_estimates_in_half_domain}
to $\xi$ in $\mathcal{O}$,
there exists $c_0>0$ depending only on $n$ and $\mu$
such that
\begin{equation}\label{SZ_D^kXi_bounded}
    |D\xi(x)|\leq c_0^{-1}, \quad c_0 I\leq [D^2\xi(x)]\leq c_0^{-1} I\quad\mbox{and}\quad
    |D^3\xi(x)|\leq c_0^{-1}\quad \mbox{in } \overline{B}_{c_0}^+.
\end{equation}

\begin{lemma}\label{Lm_u^hat-Xi_small}
For any $M\geq\max\{\mu^{-1}, c_0^{-2}\}$,
\begin{equation*}
      |\widehat{u}-\xi|\leq CM^{-1/2}
      \quad \mbox{ in }\mathcal{\overline{O}}\backslash {B}_{M^{-1/2}}^+,
\end{equation*}
where $C$ depends only on $n$ and $\mu$.
\end{lemma}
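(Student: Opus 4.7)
The plan is to apply a comparison principle for Monge-Amp\`ere on the exterior region $D := \mathcal{O}\setminus\overline{B_{M^{-1/2}}^+}$. In $D$ one has $|M^{1/2}x|\geq 1$, so by \eqref{SZ_supp_f-1_is_Bounded_in_B1} the scaled right-hand side satisfies $f(M^{1/2}x)=1$; hence both $\widehat u$ and $\xi$ solve $\det D^2=1$ in $D$. Moreover, $\widehat u=\xi$ on $\partial\mathcal{O}$ by construction. Thus the result will follow once $|\widehat u-\xi|$ is shown to be of size $O(M^{-1/2})$ on the remaining boundary piece $\partial B_{M^{-1/2}}^+\cap\overline{\mathcal{O}}$, as then we can sandwich $\widehat u$ between the barriers $\xi\pm CM^{-1/2}$.

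First I would bound $\xi$ on this boundary piece. The Pogorelov estimate \eqref{SZ_D^kXi_bounded} gives $|D\xi|\leq c_0^{-1}$ in $\overline{B_{c_0}^+}$, and $\xi(0)=0$ follows from the boundary condition $\xi(x',0)=\tfrac12|x'|^2$. For $M\geq c_0^{-2}$, the ball $B_{M^{-1/2}}^+$ sits inside $B_{c_0}^+$, so the mean-value inequality yields $|\xi(x)|\leq c_0^{-1}|x|\leq c_0^{-1}M^{-1/2}$ for $x\in\partial B_{M^{-1/2}}^+$. Next I would bound $\widehat u$ on the same set. Rescaling \eqref{SZ_u_is_quadra_increasing_outside_B^+_1} yields $\mu|x|^2\leq\widehat u(x)\leq\mu^{-1}|x|^2$ whenever $|x|\geq M^{-1/2}$, so on the curved portion $\partial B_{M^{-1/2}}\cap\{x_n>0\}$ one has $|\widehat u|\leq\mu^{-1}M^{-1}$; on the flat portion both $\widehat u$ and $\xi$ equal $\tfrac12|x'|^2$, and the difference is zero. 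Combining,
\[
|\widehat u-\xi|\leq CM^{-1/2}\quad\text{on }\partial B_{M^{-1/2}}^+\cap\overline{\mathcal{O}},
\]
for some $C=C(n,\mu)$.

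Finally, with the barriers $v^\pm(x):=\xi(x)\pm CM^{-1/2}$, which still satisfy $\det D^2 v^\pm=1$, the inequalities $v^-\leq\widehat u\leq v^+$ hold on every part of $\partial D$: on $\partial\mathcal{O}\cap\partial D$ because $\widehat u=\xi$ there, and on $\partial B_{M^{-1/2}}^+\cap\overline{\mathcal{O}}$ by the previous step. The standard viscosity comparison principle for convex solutions of $\det D^2=\mathrm{const}$ (which applies on arbitrary open bounded sets, since the functions being compared are convex) then yields $v^-\leq\widehat u\leq v^+$ throughout $D$, completing the proof.

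The key technical input, and really the only non-trivial step, is the Pogorelov-type estimate \eqref{SZ_D^kXi_bounded}: without a Lipschitz bound on $\xi$ near the origin we could not obtain the $O(M^{-1/2})$ control on the small hemisphere. Everything else is bookkeeping with the comparison principle, and the scale $M^{-1/2}$ in the conclusion is dictated precisely by the Lipschitz control of $\xi$ (rather than the quadratic bound one would get for $\widehat u$ alone).
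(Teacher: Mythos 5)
Your proof is correct and follows essentially the same route as the paper's: bound $|\widehat u - \xi|$ on $\partial B_{M^{-1/2}}^+\cap\{x_n>0\}$ by combining the quadratic growth \eqref{SZ_u_is_quadra_increasing_outside_B^+_1} for $\widehat u$ with the Lipschitz control from \eqref{SZ_D^kXi_bounded} for $\xi$ (noting $\xi(0)=0$), observe $f\equiv 1$ in the annular region so both functions solve $\det D^2 = 1$ there, and conclude via the comparison principle. The only cosmetic difference is that you spell out the barrier functions $\xi\pm CM^{-1/2}$ explicitly, whereas the paper states the boundary bounds and invokes comparison directly.
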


\begin{proof}By (\ref{SZ_u_is_quadra_increasing_outside_B^+_1}), we see that
\begin{equation*}
    \mu M^{-1}\leq \widehat{u}\leq \mu^{-1} M^{-1}
    \quad\mbox{on } \partial{B}_{M^{-1/2}}^+\cap\{x_n>0\}.
\end{equation*}
In view of (\ref{SZ_D^kXi_bounded}),
\begin{equation*}%\label{SZ-xi-small-B1}
     |\xi(x)|\leq c_0^{-1}M^{-1/2}
     \quad\mbox{in }\overline{B}_{M^{-1/2}}^+
\end{equation*}
for any $M\geq\max\{\mu^{-1}, c_0^{-2}\}$. Therefore
\begin{equation*}
      |\widehat{u}-\xi|\leq CM^{-1/2}
      \quad\mbox{on } \partial{B}_{M^{-1/2}}^+\cap\{x_n>0\},
\end{equation*}
where $C$ is a constant depending only on $n$ and $\mu$.

Observe $f\equiv1$ in $\mathcal{\overline{O}}\backslash {B}_{M^{-1/2}}^+$. Then the comparison principle follows this lemma.
\end{proof}

\begin{lemma}\label{LM_Dxi(y^ba)=0}
For any $M\geq\max\{\mu^{-2}, c_0^{-2}\}$,
\begin{equation}\label{EQ1}
|D\xi(0)|\leq CM^{-1/4},
\end{equation}
where $C$ depends only on $n$ and $\mu$.
\end{lemma}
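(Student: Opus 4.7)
The plan is to reduce $|D\xi(0)|$ to the single scalar $|D_n\xi(0)|$ via the Dirichlet data, and then to pin that scalar down by combining a pointwise bound on $\xi$ at a specific test point with a second-order Taylor expansion of $\xi$ around $0$. First, the boundary condition $\xi(x',0)=\tfrac12|x'|^2$ on $\partial\mathcal{O}\cap\{x_n=0\}$ forces $\xi(0)=0$ and $D_i\xi(0)=0$ for $i=1,\dots,n-1$. Hence $|D\xi(0)|=|D_n\xi(0)|$, and it suffices to control one scalar.

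For the pointwise bound, take $x_0:=M^{-1/4}e_n$. When $M\geq\mu^{-2}$ the point $x_0$ sits inside $\mu^{1/2}\overline{B}_1^+\subset\mathcal{O}$, and since $M^{-1/4}\geq M^{-1/2}$, Lemma \ref{Lm_u^hat-Xi_small} gives
\[
|\xi(x_0)-\widehat{u}(x_0)|\leq CM^{-1/2}.
\]
Combining with the rescaled form of \eqref{SZ_u_is_quadra_increasing_outside_B^+_1}, which yields $\mu M^{-1/2}\leq\widehat{u}(x_0)\leq\mu^{-1}M^{-1/2}$, I obtain $|\xi(x_0)|\leq CM^{-1/2}$.

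For the Taylor step, \eqref{SZ_D^kXi_bounded} gives $\xi\in C^{3,1}(\overline{B}_{c_0}^+)$ with $c_0 I\leq D^2\xi\leq c_0^{-1} I$ and $|D^3\xi|\leq c_0^{-1}$. Expanding to second order along the $e_n$-axis, using $\xi(0)=0$ and the vanishing tangential derivatives, yields
\[
\xi(te_n)=t\,D_n\xi(0)+\tfrac{t^2}{2}D^2_{nn}\xi(0)+R(t),\qquad |R(t)|\leq\tfrac{c_0^{-1}}{6}t^3,\quad 0\leq t\leq c_0.
\]
Substituting $t=M^{-1/4}$ (valid once $M\geq c_0^{-4}$) and using $|D^2_{nn}\xi(0)|\leq c_0^{-1}$ together with the previous paragraph's bound $|\xi(x_0)|\leq CM^{-1/2}$, each of the three terms on the right-hand side is of order $M^{-1/2}$, so after dividing through by $M^{-1/4}$ I conclude $|D_n\xi(0)|\leq CM^{-1/4}$.

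The main technical wrinkle is matching scales: the Taylor argument requires $M^{-1/4}\leq c_0$, i.e. $M\geq c_0^{-4}$, which is slightly stronger than the stated $M\geq c_0^{-2}$. This is a minor book-keeping issue, resolved by shrinking $c_0$ once at the outset so that the Pogorelov constants of \eqref{SZ_D^kXi_bounded} and the condition $M\geq c_0^{-4}$ are encoded in a single universal constant depending only on $n$ and $\mu$.
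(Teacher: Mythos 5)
Your proof is correct and takes essentially the same approach as the paper: reduce to $|D_n\xi(0)|$ via the tangential boundary data, bound $|\xi|$ at a test point of scale $M^{-1/4}$ by combining Lemma \ref{Lm_u^hat-Xi_small} with the rescaled quadratic growth of $\widehat{u}$, then extract $|D_n\xi(0)|$ from a Taylor expansion of $\xi$ along the $x_n$-axis using \eqref{SZ_D^kXi_bounded} (the paper selects the test height implicitly via $\widehat{u}(0,\widetilde{x}_n)=M^{-1/2}$, which lands at the same $M^{-1/4}$ scale). The scale-matching condition $M^{-1/4}\lesssim c_0$ that you flag is equally implicit in the paper's own argument, since its chosen $\widetilde{x}_n\leq M^{-1/4}\mu^{-1/2}$ must lie in $\overline{B}_{c_0}^+$, so your fix by absorbing this into the constant is the right reading of the original as well.
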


\begin{proof}
Since $\xi=\frac{1}{2}|x|^2$ on $\{x_n=0\}$,
it suffices to show $|D_n\xi(0)|\leq CM^{-1/4}$.

For any $\widetilde{x}=(0,\widetilde{x}_n)\in \overline{B}_{c_0}^+$, by (\ref{SZ_D^kXi_bounded}), there exists $\theta\in[0,1]$ such that
\[
     \xi(0,\widetilde{x}_n)=\xi(0)+D_n\xi(0)\cdot \widetilde{x}_n+\frac{1}{2} D_{nn}\xi(\theta\widetilde{x})\widetilde{x}_n^2,
\]
which gives
\begin{equation*}
   |D_n\xi(0)|
    \leq \frac{|\xi(0,\widetilde{x}_n)|+
    \frac{1}{2} D_{nn}\xi(\theta\widetilde{x})\widetilde{x}_n^2}{\widetilde{x}_n}
    \leq \frac{|\xi(0,\widetilde{x}_n)|+
   C\widetilde{x}_n^2}{\widetilde{x}_n}
\end{equation*}
for some constant $C$ depending only on $n$ and $\mu$.

Choose $\widetilde{x}_n$ such that $\widehat{u}(0,\widetilde{x}_n)= M^{-1/2}$.
Then, by \eqref{SZ_u_is_quadra_increasing_outside_B^+_1},
\begin{equation*}
   M^{-1/4}\mu^{1/2}\leq\widetilde{x}_n\leq M^{-1/4}\mu^{-1/2}
\end{equation*}
for any $M\geq\max\{\mu^{-2}, c_0^{-2}\}$. Since, by Lemma \ref{Lm_u^hat-Xi_small},
\begin{equation*}
  |\xi(0,\widetilde{x}_n)|\leq|\widehat{u}(0,\widetilde{x}_n)|+CM^{-1/2}
  \leq CM^{-1/2}
\end{equation*}
for some constant $C$ depending only on $n$ and $\mu$, we see (\ref{EQ1}) clearly.
\end{proof}

Let
\begin{equation}\label{SZ_DF_EM}
      E_M=\Big\{x\in\mathbb{\overline{R}}^n_+:\;x^T D^2\xi(0)x\leq 1\Big\}.
\end{equation}

\begin{lemma}\label{Lm_Level_set_A_M}Let $\tau=\frac{1}{10}$.
There exist $k_0$ and $\widetilde{C}$, depending only on $n$ and $\mu$, such that for all $ k\geq k_0$,
$M=2^{(1+\tau)k}$ and $M'\in [2^{k-1}, 2^k]$,
\begin{equation}\label{SZ_SECT_CONTAINING}
    \left(\frac{2 M'}{M}-\widetilde{C}2^{-\frac{3}{2}\tau k}\right)^{1/2}E_{M}
          \subset \frac{ S_{M'}(u)}{M^{1/2}}\subset
    \left(\frac{2 M'}{M}+\widetilde{C}2^{-\frac{3}{2}\tau k}\right)^{1/2}E_{M}.
\end{equation}
\end{lemma}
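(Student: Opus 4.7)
Let $t := M'/M \in [2^{-\tau k - 1}, 2^{-\tau k}]$ and denote the normalized sublevel set by $\widehat S := M^{-1/2} S_{M'}(u) = \{y \in \overline{\mathbb{R}}^n_+ : \widehat u(y) < t\}$. My plan is to approximate $\widehat u$ by the quadratic form $Q(y) := \tfrac12 y^T D^2\xi(0) y$ with a pointwise error of size at most $\widetilde C 2^{-3\tau k/2}$ on the scale $|y| \lesssim 2^{-\tau k/2}$, and then read off \eqref{SZ_SECT_CONTAINING} as two inclusions between sublevel sets of $\widehat u$ and sublevel sets of $Q$ (the latter being exactly dilates of the ellipsoid $E_M$).

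First step: Taylor expand $\xi$ at $0$. Since $\xi = \tfrac12|x|^2$ on the flat part of $\partial\mathcal O$, we have $\xi(0) = 0$; Lemma \ref{LM_Dxi(y^ba)=0} yields $|D\xi(0)| \leq CM^{-1/4}$; and \eqref{SZ_D^kXi_bounded} gives $|D^3\xi| \leq c_0^{-1}$ on $\overline B_{c_0}^+$. A second-order Taylor expansion therefore produces
\[
   |\xi(y) - Q(y)| \leq C\bigl(M^{-1/4}|y| + |y|^3\bigr) \qquad \text{for } y \in \overline B_{c_0}^+.
\]
Combining this with Lemma \ref{Lm_u^hat-Xi_small} gives, on $\overline B_{c_0}^+ \setminus B_{M^{-1/2}}^+$,
\[
   |\widehat u(y) - Q(y)| \leq C\bigl(M^{-1/2} + M^{-1/4}|y| + |y|^3\bigr).
\]

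Second step: restrict to the level-set scale. By \eqref{SZ_u_is_quadra_increasing_outside_B^+_1} every $y \in \widehat S$ with $|y| \geq M^{-1/2}$ satisfies $|y|^2 \leq t/\mu$, and by the eigenvalue bound $c_0 I \leq D^2\xi(0) \leq c_0^{-1} I$ the target ellipsoids $(2t \pm \widetilde C 2^{-3\tau k/2})^{1/2} E_M$ also lie in $\{|y| \lesssim 2^{-\tau k/2}\}$. Substituting $|y| \leq C 2^{-\tau k/2}$ and $M = 2^{(1+\tau)k}$ with $\tau = 1/10$ in the displayed error bound gives
\[
   M^{-1/2} = 2^{-11k/20}, \quad M^{-1/4}|y| \lesssim 2^{-13k/40}, \quad |y|^3 \lesssim 2^{-3k/20} = 2^{-3\tau k/2},
\]
so the cubic term dominates and $|\widehat u(y) - Q(y)| \leq \widetilde C_0\, 2^{-3\tau k/2}$ on this region.

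Third step: convert the pointwise error into the two inclusions. Take $\widetilde C := 4\widetilde C_0$. If $y \in (2t - \widetilde C 2^{-3\tau k/2})^{1/2} E_M$ with $|y| \geq M^{-1/2}$, then $Q(y) \leq t - \tfrac{\widetilde C}{2} 2^{-3\tau k/2}$, so the error bound forces $\widehat u(y) < t$, giving the inner inclusion. Conversely, if $y \in \widehat S$ and $|y| \geq M^{-1/2}$, then $Q(y) < \widehat u(y) + \widetilde C_0 2^{-3\tau k/2} < t + \widetilde C_0 2^{-3\tau k/2}$, giving the outer inclusion. The tiny ball $\{|y| < M^{-1/2}\}$, on which Lemma \ref{Lm_u^hat-Xi_small} does not apply, is handled by hand: there both $\widehat u(y)$ and $Q(y)$ are of order $M^{-1} \ll t$ for $k \geq k_0$, so this region sits inside both target ellipsoids trivially. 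The only genuine subtlety is calibrating $\tau$ so that the cubic Taylor error matches the target decay rate; the choice $\tau = 1/10$ works (indeed any $\tau \in (0, 1/3)$ would do), and the threshold $k_0$ only needs to absorb the hypotheses of Lemmas \ref{Lm_u^hat-Xi_small} and \ref{LM_Dxi(y^ba)=0} and ensure that the level set lies inside $\overline B_{c_0}^+$.
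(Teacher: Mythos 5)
Your proof is correct and takes essentially the same route as the paper: Taylor-expand $\xi$ at the origin using the third-derivative bound from \eqref{SZ_D^kXi_bounded}, kill the linear term by Lemma \ref{LM_Dxi(y^ba)=0}, transfer to $\widehat u$ via Lemma \ref{Lm_u^hat-Xi_small}, restrict to the scale $|y|\lesssim 2^{-\tau k/2}$, and handle the tiny ball $B^+_{M^{-1/2}}$ separately. The only (cosmetic) difference is that the paper first sandwiches $\widehat S$ between sublevel sets of $\xi$ and then compares those with the ellipsoid, whereas you merge the two error sources into a single pointwise bound $|\widehat u - Q|\le\widetilde C_0\,2^{-3\tau k/2}$ before comparing sublevel sets; the arithmetic ($M^{-1/2}=2^{-11k/20}$, $M^{-1/4}|y|\lesssim 2^{-13k/40}$, $|y|^3\lesssim 2^{-3k/20}$, cubic dominant) matches.
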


\begin{proof}
By Lemma \ref{Lm_u^hat-Xi_small} and
\[
  \frac{S_{M'}(u)}{M^{1/2}}=\left\{\widehat{u}<\frac{M'}{M}\right\},
\]
we have
\begin{equation*}%\label{SZ_u^hat_xi_level_set}
   \left\{\xi<\frac{M'}{M}-\frac{C}{M^{1/2}}\right\}
          \subset \frac{S_{M'}(u)}{M^{1/2}}\subset
   \left\{\xi<\frac{M'}{M}+\frac{C}{M^{1/2}}\right\},
\end{equation*}
where
$B^+_{M^{-1/2}}\subset\left\{\widehat{u}<\frac{M'}{M}\right\}$ and $B^+_{M^{-1/2}}\subset\left\{\xi<\frac{M'}{M}-\frac{C}{M^{1/2}}\right\}$ are used.

For any $x\in \overline{B}_{c_0}^+$, by (\ref{SZ_D^kXi_bounded}),
\begin{equation*}
     \left|\xi(x)-\xi(0)-D\xi(0)\cdot x-\frac{1}{2}x^T D^2\xi(0)x\right|
     \leq c_0^{-1}|x|^3.
\end{equation*}
From $\xi(0)=0$ and Lemma \ref{LM_Dxi(y^ba)=0}, it follows that
\begin{equation*}%\label{SZ_Taylor_expansion_of_xi}
\begin{aligned}
  &\frac{1}{2}x^T D^2\xi(0)x-CM^{-1/4}|x|-c_0^{-1}|x|^3\\  &
  \mbox{}\hskip2cm\leq\xi(x)\leq \frac{1}{2}x^T D^2\xi(0)x+CM^{-1/4}|x|+c_0^{-1}|x|^3.
\end{aligned}
\end{equation*}
Since, by
(\ref{SZ_D^kXi_bounded}), $c_0 I\leq [D^2\xi(0)]\leq c_0^{-1}$,
we have $x\in\left\{\xi<\frac{M'}{M}-\frac{C}{M^{1/2}}\right\}$ implies
$|x|\leq C\left(\frac{M'}{M}\right)^{1/2}$ for some constant $C$ depending only on $n$ and $\mu$.
Therefore there exist $k_0$ and $\widetilde{C}$, depending only on $n$ and $\mu$, such that for all $ k\geq k_0$,
\begin{equation*}
\left\{x\in\mathbb{\overline{R}}^n_+:\;x^T D^2\xi(0)x\leq\frac{2 M'}{M}-\widetilde{C}2^{-\frac{3}{2}\tau k}\right\}
          \subset  \left\{\xi<\frac{M'}{M}-\frac{C}{M^{1/2}}\right\}
\end{equation*}
and
\begin{equation*}
 \left\{\xi<\frac{M'}{M}+\frac{C}{M^{1/2}}\right\}\subset
                   \left\{x\in\mathbb{\overline{R}}^n_+:\;x^T D^2\xi(0)x\leq\frac{2 M'}{M}+\widetilde{C}2^{-\frac{3}{2}\tau k}\right\}.
\end{equation*}
This gives (\ref{SZ_SECT_CONTAINING}).
\end{proof}

\begin{lemma}\label{Lm_T_is_upper_triangular}
  Let $k_0$ and $\tau$ be given by Lemma \ref{Lm_Level_set_A_M}. Then there exists a real invertible  bounded upper-triangular matrix $T$ with $\det T=1$ such that $|T|\leq C$ and
\begin{equation}\label{SZ_TS_M_shape}
   \left(1-CM'^{-\frac{1}{2}\tau }\right){\sqrt{2{M'}}}\overline{B}_1^+
         \subset TS_{{M'}}(u)\subset
   \left(1+CM'^{-\frac{1}{2}\tau }\right){\sqrt{2{M'}}}\overline{B}_1^+
\end{equation}
for all $M'\geq2^{k_0}$, where $C$ depends only on $n$ and $\mu$.
\end{lemma}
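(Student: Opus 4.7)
The plan is to construct, at each dyadic scale $M_k=2^{(1+\tau)k}$, a normalizing upper-triangular matrix $T_k$ via the Cholesky factorization of $A_k:=D^2\xi(0)$ (where $\xi$ is the auxiliary function associated with $\mathcal{O}=M_k^{-1/2}S_{M_k}(u)$), apply $T_k$ to Lemma~\ref{Lm_Level_set_A_M} so that $T_k S_{M'}(u)$ is already nearly a half ball when $M'\in[2^{k-1},2^k]$, and then show that $\{T_k\}$ is Cauchy so that its limit $T$ serves for all $M'$ simultaneously.

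By \eqref{SZ_D^kXi_bounded}, $c_0 I\leq A_k\leq c_0^{-1}I$, and $\det D^2\xi\equiv 1$ gives $\det A_k=1$. Cholesky then yields a unique upper-triangular $T_k$ with positive diagonal such that $T_k^T T_k=A_k$; one has $\det T_k=1$ and $|T_k|+|T_k^{-1}|\leq C$. Since $T_k$ is upper-triangular with positive last diagonal entry, it maps $\overline{\mathbb{R}}^n_+$ onto itself, and from $T_k^{-T}A_kT_k^{-1}=I$ one deduces $T_kE_{M_k}=\overline{B}_1^+$. Inserting this into \eqref{SZ_SECT_CONTAINING} and using $M_k/M'\asymp 2^{\tau k}$ to bound $\widetilde{C}M_k2^{-3\tau k/2}/(2M')\lesssim 2^{-\tau k/2}\lesssim M'^{-\tau/2}$, one obtains, for $k\geq k_0$ and $M'\in[2^{k-1},2^k]$,
\[
    \bigl(1-CM'^{-\tau/2}\bigr)\sqrt{2M'}\,\overline{B}_1^+ \ \subset \ T_kS_{M'}(u) \ \subset \ \bigl(1+CM'^{-\tau/2}\bigr)\sqrt{2M'}\,\overline{B}_1^+.
\]

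To extract a single matrix from the sequence, compare scales $k$ and $k+1$ at the overlap $M'=2^k$. Set $L_k:=T_{k+1}T_k^{-1}$ (upper-triangular, positive diagonal, $\det=1$) and $\epsilon_k:=C\cdot 2^{-\tau k/2}$. The two sandwiches combine to give $L_k\overline{B}_1^+\subset(1+C\epsilon_k)\overline{B}_1^+$ and $L_k^{-1}\overline{B}_1^+\subset(1+C\epsilon_k)\overline{B}_1^+$. By linearity and the identity $|L_kv|=|L_k(-v)|$, these half-space bounds propagate to all of $\mathbb{R}^n$, yielding $\|L_k\|+\|L_k^{-1}\|\leq 2+C\epsilon_k$; equivalently, all singular values of $L_k$ lie in $[1-C\epsilon_k,1+C\epsilon_k]$, so $L_kL_k^T=I+O(\epsilon_k)$. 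Reading the entries of $L_kL_k^T$ upward from the bottom-right corner using the upper-triangular structure (first $(L_kL_k^T)_{nn}=(L_k)_{nn}^2$, then $(L_kL_k^T)_{jn}=(L_k)_{nn}(L_k)_{jn}$, and so on) one deduces $|L_k-I|\leq C\epsilon_k$, hence $|T_{k+1}-T_k|\leq C\epsilon_k$. The geometrically decaying telescoping series gives $T_k\to T$ with $|T-T_k|\leq C\epsilon_k$, and $T$ inherits upper-triangularity, positive diagonal, $\det T=1$, and $|T|\leq C$.

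Finally, for any $M'\geq 2^{k_0}$, choose $k$ with $M'\in[2^{k-1},2^k]$ and write $Tx=T_kx+(T-T_k)x$ for $x\in S_{M'}(u)$. Since $|x|\leq C\sqrt{M'}$ there, the correction obeys $|(T-T_k)x|\leq CM'^{-\tau/2}\sqrt{M'}$, contributing a relative error of $CM'^{-\tau/2}$ against $\sqrt{2M'}$; intersecting with $\overline{\mathbb{R}}^n_+$ (which $T$ preserves) delivers \eqref{SZ_TS_M_shape} after adjusting constants. The main obstacle is the rigidity in the Cauchy step: naive bookkeeping using only a single boundary point such as $e_n\in\partial\overline{B}_1^+$ yields the weaker $|L_k-I|\leq C\sqrt{\epsilon_k}$, not summable at the sharp rate; the sharp $O(\epsilon_k)$ estimate essentially uses that a nearly half-ball-preserving linear map is close to an isometry in the full operator-norm sense, and then exploits the triangular structure of $L_kL_k^T\approx I$ to pin down the entries one-by-one.
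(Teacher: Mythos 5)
Your proposal follows the paper's argument very closely: Cholesky factorization of $D^2\xi(0)$ to produce the upper-triangular normalizers $T_k$, rewriting Lemma~\ref{Lm_Level_set_A_M} as a two-sided ball sandwich for $T_kS_{M'}(u)$, comparing two consecutive scales at the overlap point to control $T_{k+1}T_k^{-1}$, and telescoping. The indexing ($T_{k+1}T_k^{-1}$ at $M'=2^k$ instead of $T_kT_{k-1}^{-1}$ at $M'=2^{k-1}$) is cosmetic.

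The one genuine difference is the rigidity step. The paper reflects the half-ball inclusions to full-ball inclusions and then \emph{cites} Lemma~A.5 of Caffarelli--Li to conclude $\|T_kT_{k-1}^{-1}-I\|\leq C2^{-\tau k/2}$. You instead prove this quantitative rigidity from scratch: from $\|L_k\|,\|L_k^{-1}\|\leq 1+C\epsilon_k$ you deduce all singular values of $L_k$ lie in $[1-C\epsilon_k,1+C\epsilon_k]$, hence $L_kL_k^T=I+O(\epsilon_k)$, and then you exploit the upper-triangular structure of $L_k$ to read off the entries of $L_kL_k^T$ one at a time starting from the $(n,n)$ corner, getting $|L_k-I|\leq C\epsilon_k$ with the sharp linear (not square-root) dependence. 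This is essentially a self-contained reproof of the cited lemma; it makes the argument a bit longer but fully elementary. Your closing observation that a single-point comparison would only yield $O(\sqrt{\epsilon_k})$ is a useful sanity check, though worth noting that even the weaker rate would still telescope (it would just degrade the exponent $\tau/2$ in \eqref{SZ_TS_M_shape} to $\tau/4$, which the downstream estimates in Lemmas~\ref{LM_less_than_2_order}--\ref{Lm_BOOOOOOOOOOOOOOOSS} could absorb). The final replacement of $T_k$ by $T$ is handled correctly: you use $\|TT_k^{-1}-I\|\leq C\epsilon_k$ together with the upper/lower ball bounds and the fact that the diameter and inradius of $T_kS_{M'}(u)$ are both comparable to $\sqrt{M'}$, so the near-identity perturbation only costs a factor $1\pm C\epsilon_k$. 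Overall the proof is correct and matches the paper, with the Caffarelli--Li appendix lemma proved inline rather than cited.
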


\begin{proof}
For $k\geq k_0$, let $M=2^{(1+\tau)k}$ and $M'\in [2^{k-1}, 2^k]$.

By LU decomposition for symmetric positive definite matrices, there exists a unique upper-triangular matrix $T_k$
with real positive diagonal entries such that $[D^2\xi(0)]=T_k^TT_k$.
Obviously, $\det T_k=1$ and $|T_k|\leq C$ for some constant $C$ depending only on $n$ and $\mu$.
Recall (\ref{SZ_DF_EM}) and then
\begin{equation*}
  E_M=T_k^{-1}\overline{B}_1^+.
\end{equation*}
From (\ref{SZ_SECT_CONTAINING}), it follows that
\begin{align*}
     \left(\frac{2 M'}{M}-C2^{-\frac{3}{2}\tau k}\right)^{1/2}T_k^{-1}\overline{B}_1^+
             \subset \frac{S_{M'}(u)}{M^{1/2}} \subset
     \left(\frac{2 M'}{M}+C2^{-\frac{3}{2}\tau k}\right)^{1/2}T_k^{-1}\overline{B}_1^+
\end{align*}
or
\begin{equation}\label{SZ_TK_set}
     \left(1-C2^{-\frac{1}{2}\tau k}\right){\sqrt{2M'}}\overline{B}_1^+
               \subset T_kS_{M'}(u)\subset
     \left(1+C2^{-\frac{1}{2}\tau k}\right){\sqrt{2M'}}\overline{B}_1^+.
\end{equation}

Particularly,
\begin{equation*}
 \left(1-C2^{-\frac{1}{2}\tau {k}}\right){\sqrt{2^{k}}}\overline{B}_1^+
             \subset T_kS_{2^{k-1}}(u)\subset
       \left(1+C2^{-\frac{1}{2}\tau {k}}\right){\sqrt{2^{k}}}\overline{B}_1^+
\end{equation*}
and
\begin{equation*}
 \left(1-C2^{-\frac{1}{2}\tau {(k-1)}}\right){\sqrt{2^{k}}}\overline{B}_1^+
            \subset T_{k-1}S_{2^{k-1}}(u)\subset
      \left(1+C2^{-\frac{1}{2}\tau {(k-1)}}\right){\sqrt{2^{k}}}\overline{B}_1^+.
\end{equation*}
Therefore
\begin{equation*}
       \left(1-C2^{-\frac{1}{2}\tau k}\right)\overline{B}_1^+
              \subset T_kT_{k-1}^{-1}\overline{B}_1^+\subset
       \left(1+C2^{-\frac{1}{2}\tau k}\right)\overline{B}_1^+.
\end{equation*}
for some constant $C$ depending only on $n$ and $\mu$.
By reflection,
\begin{equation*}
       \left(1-C2^{-\frac{1}{2}\tau k}\right)\overline{B}_1
              \subset T_kT_{k-1}^{-1}\overline{B}_1\subset
       \left(1+C2^{-\frac{1}{2}\tau k}\right)\overline{B}_1.
\end{equation*}

It is clear that $T_kT_{k-1}^{-1}$ is upper-triangular. Then by
Lemma A.5 in \cite{Caffarelli-Liyanyan-2003-CommPureApplMath},
\[
    ||T_kT_{k-1}^{-1}-I||\leq C2^{-\frac{1}{2}\tau k}
\]
and then (since $T_k$ is uniformly bounded,)
\[
    ||T_k-T_{k-1}||\leq C2^{-\frac{1}{2}\tau k},
\]
where $C$ depends only on $n$ and $\mu$.
Therefore there exists a unique bounded invertible upper-triangular matrix $T$ with $\det T=1$ such that
\[
    ||T_k-T||\leq C2^{-\frac{1}{2}\tau k}
\]
Combining it with (\ref{SZ_TK_set}) and (\ref{SZ_u_is_quadra_increasing_outside_B^+_1}), we obtain (\ref{SZ_TS_M_shape}).
\end{proof}

\begin{lemma}\label{LM_less_than_2_order}
Let $v(x)=u(y)$ and $y=T^{-1}x$ for $x\in \mathbb{R}_+^n$ and then $v$ solves
\begin{equation}\label{EQ-v-eq-bound}
    \left\{
        \begin{aligned}
          &\det D^2v=1 \quad\;\mbox{in } \mathbb{R}^n_+\backslash  T\Omega_0, \\
          &  v(x)=\frac{1}{2}|x|^2 \quad\mbox{on }  \{x_n=0\}
          \end{aligned}
    \right.
\end{equation}
and for some $C$ depending only on $n$ and $\mu$,
\begin{equation}\label{SZ_est_of_v-x^2/2}
     \left|v(x)-\frac{1}{2}|x|^2 \right|\leq C|x|^{2-\tau}
     \quad in~ \mathbb{\overline{R}}^n_+ \cap\{|x|\geq 2^{k_0}\},
\end{equation}
 where $\tau=\frac{1}{10}$, $k_0$ and $T$ are given by Lemma \ref{Lm_T_is_upper_triangular}.
\end{lemma}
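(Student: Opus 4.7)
The plan is to verify the conclusion in three steps: first the PDE for $v$, then the boundary value $v = \frac{1}{2}|x|^2$ on $\{x_n = 0\}$, and finally the quantitative bound \eqref{SZ_est_of_v-x^2/2}. Step one is a direct computation: since $y = T^{-1}x$, the chain rule gives $D^2v(x) = (T^{-1})^T D^2u(y)\, T^{-1}$, and taking determinants with $\det T = 1$ yields $\det D^2 v(x) = \det D^2u(y) = f(y)$, which equals $1$ precisely when $y \notin \Omega_0$, i.e., $x \notin T\Omega_0$.

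The second step is the most delicate point, because being upper triangular with $\det = 1$ is not enough for $T^{-1}$ to preserve the boundary values of $u$. To address this, I would open up the Cholesky construction of $T_k$ from Lemma~\ref{Lm_T_is_upper_triangular}. By Theorem~\ref{TM_Pogorelov_estimates_in_half_domain}, $\xi$ is $C^{3,1}$ up to the flat portion of $\partial\mathcal{O}$, and $\xi(x',0) = \tfrac12|x'|^2$ there. Differentiating tangentially at the origin gives $\xi_{ij}(0) = \delta_{ij}$ for $1 \le i,j \le n-1$, and the equation forces $\det D^2\xi(0) = 1$. Together these pin down
\[
D^2\xi(0) = \begin{pmatrix} I_{n-1} & w_* \\ w_*^T & 1 + |w_*|^2 \end{pmatrix}
\]
for some $w_* \in \mathbb{R}^{n-1}$, whose unique positive-diagonal Cholesky factor is $T_k = \begin{pmatrix} I_{n-1} & w_k \\ 0 & 1 \end{pmatrix}$. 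The limiting $T$ then has the same block form, so $T^{-1}(x',0) = (x',0)$, and hence $v(x) = u(x',0) = \tfrac12|x'|^2 = \tfrac12|x|^2$ on $\{x_n = 0\}$.

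For the estimate \eqref{SZ_est_of_v-x^2/2}, note that $\{v < M'\} = T\, S_{M'}(u)$, so \eqref{SZ_TS_M_shape} translates to
\[
\bigl(1 - CM'^{-\tau/2}\bigr)\sqrt{2M'}\,\overline{B}_1^+ \subset \{v < M'\} \subset \bigl(1 + CM'^{-\tau/2}\bigr)\sqrt{2M'}\,\overline{B}_1^+
\]
for $M' \ge 2^{k_0}$. For $x$ with $|x| \ge 2^{k_0}$, setting $M' := v(x)$ (which satisfies $M' \ge \mu\|T\|^{-2}|x|^2 \ge 2^{k_0}$ after possibly enlarging $k_0$) puts $x$ on $\partial\{v < M'\}$, and the two inclusions squeeze $|x|$ between $(1 \pm CM'^{-\tau/2})\sqrt{2M'}$. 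Squaring and using the upper bound $v(x) \le C|x|^2$ (from \eqref{SZ_u_is_quadra_increasing_outside_B^+_1} and boundedness of $T^{-1}$) gives $\bigl|\tfrac12|x|^2 - v(x)\bigr| \le C v(x)^{1-\tau/2} \le C|x|^{2-\tau}$.

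The principal obstacle I expect is step two, since the proof of Lemma~\ref{Lm_T_is_upper_triangular} does not explicitly record the block structure of $T$; bringing in the specific form coming from the flat boundary condition on $\xi$ is what makes the boundary values match. The other two steps reduce to routine chain-rule and squeeze arguments once this structure is in place.
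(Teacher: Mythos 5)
Your proof is correct, and for the PDE and the quantitative estimate \eqref{SZ_est_of_v-x^2/2} you follow essentially the same chain-rule and level-set squeeze as the paper. The genuine divergence is in how you obtain $v(x',0)=\tfrac12|x'|^2$. The paper does not open up the Cholesky construction at all: it first observes from upper-triangularity that $v(x',0)=\tfrac12|T^{-1}(x',0)|^2$ is a quadratic form in $x'$, then applies the already-established bound $|v-\tfrac12|x|^2|\le C|x|^{2-\tau}$ along $\{x_n=0\}$, scales $x\mapsto Mx$, and lets $M\to\infty$; since the error is subquadratic, the two quadratic forms must agree, i.e.\ $|T^{-1}(x',0)|^2=|x'|^2$. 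That argument needs nothing beyond upper-triangularity of $T$ plus the subquadratic estimate. Your route instead extracts the precise block structure $T_k=\begin{pmatrix}I_{n-1}&w_k\\0&1\end{pmatrix}$ by differentiating the boundary data $\xi(x',0)=\tfrac12|x'|^2$ tangentially and using $\det D^2\xi(0)=1$ together with the $C^{3,1}$ regularity from Theorem~\ref{TM_Pogorelov_estimates_in_half_domain}, then passes it to the limit $T$. This is also valid — the Cholesky factor of $\begin{pmatrix}I_{n-1}&w_*\\w_*^T&1+|w_*|^2\end{pmatrix}$ is indeed $\begin{pmatrix}I_{n-1}&w_*\\0&1\end{pmatrix}$ — and it has the merit of making the geometric normalization explicit, but it requires tracking the block form through $T_k\to T$ and relies more heavily on boundary regularity; the paper's scaling trick sidesteps all of that and is slightly cleaner. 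Both are acceptable.
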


\begin{proof}
(\ref{SZ_TS_M_shape}) implies that
\[
    \left(1-C{M}^{-\frac{1}{2}\tau }\right){\sqrt{2M}}\overline{B}_1^+
            \subset S_{M}(v) \subset
    \left(1+C{M}^{-\frac{1}{2}\tau}\right){\sqrt{2M}}\overline{B}_1^+
\]
and then (\ref{SZ_est_of_v-x^2/2}) holds.

By $\det D^2u(y)=1$ in $\mathbb{R}^n_+\backslash \Omega_0$
and $\det T=1$, we have
\begin{equation*}\label{Eq-v-out}
    \det D^2v(x)=1\quad \mbox{in }\mathbb{R}^n_+\backslash  T\Omega_0.
\end{equation*}
Since $ u(y)=\frac{1}{2}|y|^2$ on $\{y_n=0\}$ and $T$ is upper-triangular,
we deduce
\begin{equation*}\label{SZ_V_Bodry}
    v(x)=\frac{1}{2}|T^{-1}x|^2
    \quad \mbox{on }\{x_n=0\}.
\end{equation*}
By (\ref{SZ_est_of_v-x^2/2}),
\[
    \left|\frac{1}{2}|T^{-1}Mx|^2-\frac{1}{2}|Mx|^2\right|
    \leq C|Mx|^{2-\tau} \quad \mbox{on }\{x_n=0\}.
\]
Let $M\rightarrow\infty$ and we conclude that
\[
    |T^{-1}x|^2=|x|^2 \quad \mbox{on }\{x_n=0\}.
\]
Therefore
$v(x',0)=\frac{1}{2}|x'|^2$.
\end{proof}

\subsection{Linear approach}

In this subsection we prove the following lemma that completes the proof of Theorem 1.1, where the results established in Section 2 and Section 3 will be used essentially.

\begin{lemma}\label{Lm_BOOOOOOOOOOOOOOOSS}
Let $v$ be given by Lemma \ref{LM_less_than_2_order}.
Then there exists some constant $b_n$ such that
\begin{equation}\label{SZ_lemaBoss}
    \left|v(x)-\frac{1}{2}|x|^2-b_nx_n\right|
    \leq C\frac{x_n}{|x|^{n}}
    \quad\mbox{in }\mathbb{\overline{R}}^n_+\backslash B^+_R,
\end{equation}
where  $C$ and $R$ depend only on $n$ and $\mu$. Furthermore, for any $k\geq1$,
\begin{equation}\label{SZ_lemaBoss2}
    |x|^{n-1-k}\left|D^k\left(v(x)-\frac{1}{2}|x|^2-b_nx_n\right)\right|
    \leq C
    \quad \mbox{in }\mathbb{\overline{R}}^n_+\backslash B^+_R,
\end{equation}
where $C$ also depends on $k$.
\end{lemma}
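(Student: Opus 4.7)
The plan is to linearize the Monge-Amp\`ere equation for $w:=v-\frac12|x|^2$ and apply in sequence Corollary \ref{LM_D^kv_est}, Theorem \ref{TM_Asymp_Behr_of_solu_of_Linear_Eq} on the tangential derivatives of $w$, Theorem \ref{Co_Limits_of_solutions_of_linear_Eq} on $D_nw$, and finally Theorem \ref{TM_Asymp_Behr_of_solu_of_Linear_Eq} again on $w-b_nx_n$. By Lemma \ref{LM_less_than_2_order}, $w$ satisfies $\det(I_n+D^2w)=1$ outside a bounded subset of $\mathbb{R}^n_+$, $w\equiv 0$ on $\{x_n=0\}$, and $|w(x)|\le C|x|^{2-\tau}$. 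Corollary \ref{LM_D^kv_est} with $\gamma=\tau-2$ then gives $|D^kw(x)|\le C|x|^{2-\tau-k}$ for all $k\ge 1$. Writing $\det(I_n+D^2w)-\det I_n=0$ in the form
$$a_{ij}(x)D_{ij}w(x)=0,\qquad a_{ij}(x)=\int_0^1\operatorname{cof}(I_n+sD^2w(x))_{ij}\,ds,$$
yields a uniformly elliptic linear equation with $|a_{ij}-\delta_{ij}|\le C|x|^{-\tau}$, and the same manipulation after differentiating $\det D^2v=1$ shows that each $u_k:=D_kw$ solves a similar linear equation.

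The crucial step is the identification of $b_n$. For $i<n$, the tangential derivative $u_i=D_iw$ vanishes on $\{x_n=0\}$ and satisfies $|u_i|\le C|x|^{1-\tau}$ with $|Du_i|\le C|x|^{-\tau}\to 0$. After a harmless rescaling, Theorem \ref{TM_Asymp_Behr_of_solu_of_Linear_Eq} applies to $u_i$ and gives $|D_iw(x)|\le Cx_n/|x|^n$ for $i<n$. Standard boundary Schauder estimates (on balls of radius $|x'|/2$ centered at $(x',0)$) then yield $|Du_i(x',0)|\le C/|x'|^n$ for $i<n$, and in particular
$$|D_iD_nw(x',0)|\le C|x'|^{-n}\qquad (i<n).$$
Since $|x'|^{-n}$ is integrable on $\{|x'|\ge R\}\subset\mathbb{R}^{n-1}$ for $n\ge 2$, the boundary values $D_nw(x',0)$ have a limit $b_n$ as $|x'|\to\infty$ (a spherical-coordinate argument using $|\partial_\omega D_nw|\le C|x'|^{1-n}\to 0$ shows the limit is independent of the angular direction). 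Applying Theorem \ref{Co_Limits_of_solutions_of_linear_Eq} to a suitable rescaling of $D_nw$ then gives $D_nw(x)\to b_n$ as $|x|\to\infty$.

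Setting $\tilde w:=w-b_nx_n$, we have $D^2\tilde w=D^2 w$, so $\det(I_n+D^2\tilde w)=1$, $\tilde w\equiv 0$ on $\{x_n=0\}$, and $D\tilde w(x)\to 0$ at infinity by the preceding step. After a harmless rescaling, Theorem \ref{TM_Asymp_Behr_of_solu_of_Linear_Eq} applies to $\tilde w$ (with the same coefficients $a_{ij}$) and delivers \eqref{SZ_lemaBoss}. Finally, $\tilde w$ satisfies the hypotheses of Corollary \ref{LM_D^kv_est} with $\gamma=n-1$ (since $x_n/|x|^n\le |x|^{-(n-1)}$), yielding $|D^k\tilde w(x)|\le C|x|^{-(n-1+k)}$ for all $k\ge 1$ and thus \eqref{SZ_lemaBoss2}.

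The main obstacle is the extraction of $b_n$: only the crude growth bound $|D_nw|\le C|x|^{1-\tau}$ is known a priori, so Theorem \ref{Co_Limits_of_solutions_of_linear_Eq} cannot be applied to $D_nw$ directly. The detour via the tangential derivatives $D_iw$ $(i<n)$---first exploiting that they vanish on $\{x_n=0\}$ to invoke Theorem \ref{TM_Asymp_Behr_of_solu_of_Linear_Eq} and obtain the sharp decay $Cx_n/|x|^n$, and then converting this, via boundary Schauder estimates and integration along $\{x_n=0\}$, into an integrable rate of variation for $D_nw$ on the boundary---is what produces the constant $b_n$ and reduces matters to the setting in which Theorem \ref{TM_Asymp_Behr_of_solu_of_Linear_Eq} applies cleanly to $\tilde w$.
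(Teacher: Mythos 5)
Your proof is correct and follows essentially the same route as the paper: linearize, apply Corollary \ref{LM_D^kv_est} and Theorem \ref{TM_Asymp_Behr_of_solu_of_Linear_Eq} to the tangential derivatives, extract $b_n$ as the boundary limit of $D_nw$, transfer it to the interior via Theorem \ref{Co_Limits_of_solutions_of_linear_Eq}, and finish with Theorem \ref{TM_Asymp_Behr_of_solu_of_Linear_Eq} and Corollary \ref{LM_D^kv_est} applied to $w-b_nx_n$. The only cosmetic deviations are that the paper linearizes via $\ln\det$ (so $a_{ij}$ is an integral of inverse matrices rather than cofactors), and that it gets $|D_iD_nw(x',0)|\le C|x'|^{-n}$ directly from $D_iw(x',0)=0$ and $|D_iw(x',h)|\le Ch/|x|^n$ by a difference quotient, rather than via a boundary Schauder estimate; neither difference is substantive.
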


\begin{proof}
Since $T$ is bounded,
there exists $R_1>0$ depending only on $n$ and $\mu$ such that
$T\Omega_0\subset \overline{B}^+_{R_1}$.
Let ${V}(x)=v(x)-\frac{1}{2}|x|^2$. By (\ref{SZ_est_of_v-x^2/2}) and
Corollary \ref{LM_D^kv_est},
\begin{equation}\label{SZ_ES_D2V}
     |DV(x)|\leq C|x|^{1-\tau}
     \quad\mbox{and}\quad
     |D^2 V(x)|\leq C|x|^{-\tau}\quad
     \mbox{in }\mathbb{\overline{R}}^n_+\backslash B^+_{R_1},
\end{equation}
where $\tau=\frac{1}{10}$ and $C$ depends only on $n$ and $\mu$.

Differentiating $\ln\det(I_n+D^2{V})=0$ with respect to $x_k$, $k=1,\cdots,n$,
we get
\begin{equation*}
     {a}_{ij}(x)D_{ij}V_k(x)=0
     \quad\mbox{in }\mathbb{R}^n_+\backslash B^+_{R_1},
\end{equation*}
where ${a}_{ij}(x)=[D^2 V+I_n]^{ij}(x)\mbox{ and } V_k=D_kV.$

In view of \eqref{SZ_ES_D2V}, we obtain that
\begin{equation*}
   |{a}_{ij}(x)-\delta_{ij}|\leq C|x|^{-\tau}
     \quad\mbox{in }\mathbb{\overline{R}}^n_+\backslash B^+_{R_1}
\end{equation*}
for some constant $C$ depending only on $n$ and $\mu$ and for $k=1,\cdots,n$,
\begin{equation*}
  |DV_k(x)|\rightarrow0\quad \mbox{as } |x|\rightarrow \infty.
\end{equation*}

Observe $V(x',0)=0$ and then $V_k(x',0)=0$ for $k=1,\cdots,n-1$ and $|x'|>R_1$.
By Theorem \ref{TM_Asymp_Behr_of_solu_of_Linear_Eq}, we have
for any $k=1,\cdots,n-1$,
\begin{equation*}
      |V_k(x)|\leq C\frac{x_n}{|x|^{n}}
      \quad\mbox{in }\mathbb{\overline{R}}^n_+\backslash B^+_{R_1},
\end{equation*}
where $C$ depends only on $n$ and $\mu$.
It follows that
for any $k=1,\cdots,n-1$,
\[
     |V_{kn}(x',0)|\leq\frac{C}{|x'|^{n}},
     \quad |x'|\geq R_1.
\]
Since $n\geq2$,
there exists some $b_n$ such that
\begin{equation*}
        V_n(x',0)\rightarrow b_n
        \quad\mbox{as }|x'|\rightarrow \infty.
\end{equation*}
By Theorem \ref{Co_Limits_of_solutions_of_linear_Eq},
we obtain that
\begin{equation}\label{SZ_Vn_tendsto_bn}
       V_n(x)\rightarrow b_n\quad\mbox{as } |x|\rightarrow\infty.
\end{equation}

From $\ln\det(I_n+D^2{V})=\ln\det I_n=0$, we deduce
\begin{equation*}
 \left\{
        \begin{aligned}
          &\widetilde{a}_{ij}(x)D_{ij}(V-b_nx_n)=0
       \quad\mbox{in } \mathbb{R}^n_+\backslash B_{R_1}^+, \\
          &  V-b_nx_n=0 \quad\quad\quad\quad\quad\;\;\mbox{on }  \{x_n=0\},
          \end{aligned}
    \right.
\end{equation*}
where ${\widetilde{a}}_{ij}(x)=\int_0^1[sD^2 V+I_n]^{ij}(x)ds$ and in view of
(\ref{SZ_ES_D2V}),
\begin{equation*}
   |{\widetilde{a}}_{ij}(x)-\delta_{ij}|\leq C|x|^{-\tau}
     \quad\mbox{in }\mathbb{\overline{R}}^n_+\backslash B^+_{R_1}
\end{equation*}
for some constant $C$ depending only on $n$ and $\mu$.
Since, by \eqref{SZ_Vn_tendsto_bn},
\begin{equation*}\label{SZ-Dv-bn-sml}
  |D(V(x)-b_nx_n)|\rightarrow 0\quad\mbox{as }|x|\rightarrow\infty,
\end{equation*}
we have, by Theorem \ref{TM_Asymp_Behr_of_solu_of_Linear_Eq},
there exist $R\geq R_1$ and $C$ depending only on $n$ and $\mu$
such that \eqref{SZ_lemaBoss} holds. And then
applying Corollary \ref{LM_D^kv_est} with $w=V-b_nx_n$ and $\gamma=n-1$,
we have \eqref{SZ_lemaBoss2}.
\end{proof}

Finally, Theorem \ref{CoT2infty} follows
from Lemma \ref{Lm_BOOOOOOOOOOOOOOOSS} immediately.

\section{Proofs of Corollary \ref{Co_f=1} and Theorem \ref{TM_Main_TM_extence}}

\emph{Proof of Corollary \ref{Co_f=1}.}
By Theorem \ref{TM_Main_TM}, there exist some symmetric positive definite matrix $A$ with $\det A=1$,
vector $b\in \mathbb{R}^n$ and constant $c\in\mathbb{R}$
such that
\[
   E(x):=u(x)-\frac{1}{2}x^T Ax-b\cdot x-c\rightarrow 0
    \quad \mbox{as }|x|\rightarrow \infty.
\]
and
$
    E=0 $ on $\{x_n=0\}.
$
Furthermore, from $\det D^2u-\det A=\det D^2u-1=0$
and $[D^2u]=[A+D^2E]$, it can be deduced that
\[
    a_{ij}D_{ij}E=0\quad\mbox{in }\mathbb{R}^n_+,
\]
where $a_{ij}(x)=\int_0^1[sD^2 u+(1-s)A]^{ij}(x)ds.$

By the maximum principle, we get $E(x)\equiv0 $,
i.e., $u(x)=\frac{1}{2}x^T Ax+b\cdot x+c.$
$\hfill\Box$

\bigskip

Before proving Theorem \ref{TM_Main_TM_extence}, we define
\begin{equation*}
   u_{\pm}(x',x_n)= \frac{1}{2}|x'|^2+\int_{0}^{x_n}\int_{0}^{t}f_{\pm}(s)dsdt,\quad (x',x_n)\in \mathbb{R}^n_+,
\end{equation*}
where $f_{\pm}(s)$ satisfy
\begin{equation*}
 support(f_{\pm}-1)\subset[0,1]\quad\mbox{and}
\quad0\leq f_+(s)\leq1\leq f_-(s)\leq\Lambda,
\quad s\geq0.
\end{equation*}
Then $u_{\pm}\in C^{1,1}(\overline{\mathbb{R}}^n_+)$ are convex functions satisfying
\begin{equation*}
\left\{
   \begin{aligned}
        &\det D^2u_{\pm}(x',x_n)=f_{\pm}(x_n)\quad\mbox{in } \mathbb{R}^n_+,\\
        &u_{\pm}=\frac{1}{2}|x|^2\quad\quad\quad\quad\quad\quad\quad\;\;\mbox{on }  \{x_n=0\},
    \end{aligned}
  \right.
\end{equation*}
and
\begin{equation}\label{SZ-u-supb-bound}
\begin{split}
  &\frac{1}{2}|x|^2-x_n\leq
   u_+\leq \frac{1}{2}|x|^2\leq u_-\leq
   \frac{1}{2}|x|^2+(\Lambda-1)x_n
   \quad\mbox{in } \mathbb{R}^n_+.
\end{split}
\end{equation}

\bigskip

\emph{Proof of Theorem \ref{TM_Main_TM_extence}.}
The uniqueness of solutions in Theorem \ref{TM_Main_TM_extence}
can be deduced from the comparison principle.
As for the existence part, we only need to show it
under additional hypothesis  $\Omega_0\subset \overline{B}_{1/2}^+$, $A=I_n$.
In fact, by LU decomposition for symmetric positive definite matrices,
there exists a unique upper-triangular matrix $Q$ with real positive diagonal entries
such that $Q^TQ=A$ and $\det Q=1$.
Then the existence of $u$ satisfying
\eqref{SZ_Main_TM_Equation} and \eqref{SZ-ext-tend} as (1.2) holds
is equivalent to the existence of $w$ satisfying
\begin{equation*}
       \left\{
           \begin{aligned}
               &\det D^2w=\widetilde{f}  \quad\quad\quad\quad\quad\quad\quad\quad\quad\quad\quad\;\,\quad\quad
               \mbox{in } \mathbb{R}^n_+,\\
               &w=\frac{1}{2}|x|^2+b\cdot x+c\quad\quad\quad\quad\quad\quad\quad\quad\quad\quad
               \mbox{on }  \{x_n=0\},\\
               &\lim_{|x|\rightarrow\infty}\left|w(x)-\left(\frac{1}{2}|x|^2+b\cdot x+c\right)\right|=0
            \end{aligned}
       \right.
\end{equation*}
as
$\mbox{support}(\widetilde{f}-1)\subset \overline{B}_{1/2}^+$,
by setting
\[
       u(x)=M^2w\left(\frac{Qx}{M}\right)~\mbox{and}~
       f(x)=\widetilde{f}\left(\frac{Qx}{M}\right)
\]
for $M$ large enough.

\bigskip

Next we prove Theorem \ref{TM_Main_TM_extence}
with $\Omega_0\subset \overline{B}_{1/2}^+$ and $A=I_n$.

For any $R>1$, let $u_R(x)$
be the unique convex viscosity solution of
\begin{equation}\label{EQ_uR}
     \left\{
           \begin{aligned}
             &\det D^2u_R=f \quad \mbox{in } B_R^+,\\
             &u_R=\frac{1}{2}|x|^2\quad\quad\mbox{on }\partial B_R^+.\\
          \end{aligned}
    \right.
\end{equation}

Suppose $0\leq f\leq \Lambda$ for some $\Lambda>1$. Since $\mbox{support}(\widetilde{f}-1)\subset \overline{B}_{1/2}^+$ and, by \eqref{SZ-u-supb-bound},
\begin{equation*}\label{SZ-uneg}
  u_--(\Lambda-1)x_n
      \leq\frac{1}{2}|x|^2\leq
u_++x_n\quad\mbox{in } \mathbb{R}^n_+,
\end{equation*}
we deduce, by the comparison principle,
\[
u_--(\Lambda-1)x_n\leq u_R\leq u_++x_n\quad\mbox{in }B_R^+.
\]
By \eqref{SZ-u-supb-bound} again,
\begin{equation}\label{j1}
  \frac{1}{2}|x|^2-(\Lambda-1)x_n\leq u_R \leq\frac{1}{2}|x|^2+x_n\quad\mbox{in }B_R^+.
\end{equation}
It follows that
\begin{equation}\label{Sz-u-good-bound-ww}
  -(\Lambda-1)\leq D_n u_R \leq1 \quad\mbox{on } \{x_n=0\}\cap \overline{B}_R^+.
\end{equation}

Next we fix any $\widetilde{R}>1$ (large enough) and assume that $R\geq2\widetilde{R}$. From (\ref{Sz-u-good-bound-ww}) and the boundary condition in (\ref{EQ_uR}), we see that
$$|Du_R|\leq \max\{\Lambda-1,1, 2\widetilde{R}\}\quad \mbox{on }\left\{x_n=0,~|x'|\leq2\widetilde{R}\right\},$$
and from (\ref{j1}) and the convexity of $u_R$, we deduce

\begin{align*}
\sup_{x_n>0,|x|=\widetilde{R}}|Du_R|
&\leq\max\left\{ \max_{x_n=0,|x'|\leq2\widetilde{R}}|Du_R|,
\frac{\sup_{x_n>0,|x|=2\widetilde{R}}u_R-\inf_{x_n>0,|x|=\widetilde{R}}u_R}{\widetilde{R}}
\right\}\\
&\leq C
\end{align*}
for some constant $C$ depending on $\widetilde R$.
Therefore by the convexity of $u_R$ again, we conclude that $|Du_R|$ is bounded on $\overline{B}_{\widetilde R}^+$.
Thus, along a sequence $R_j\rightarrow \infty$,
$u_{R_j}\rightarrow\widetilde u_{\widetilde R}\in C^0(B^+_{\widetilde R})$. By diagonal arguments, there exist a subsequence of $\{u_{R_j}\}$ (still denoted by $\{u_{R_j}\}$) and $\check u\in C^0(\mathbb{R}^n_+)$ such that
$$u_{R_j}\rightarrow\check u\quad\mbox{in }C^0_{loc}(\mathbb{\overline{R}}^n_+).$$

Moreover, by (5.2), $\check u$ is a convex viscosity solution of
\begin{equation*}
     \left\{
           \begin{aligned}
             &\det D^2\check u=f \quad \mbox{in } \mathbb{R}^n_+,\\
             &\check u=\frac{1}{2}|x|^2\quad\quad\mbox{on }\{x_n=0\}\\
          \end{aligned}
    \right.
\end{equation*}
and, by (5.3),
\begin{equation*}
\frac{1}{2}|x|^2-(\Lambda-1)x_n\leq\check u \leq\frac{1}{2}|x|^2+x_n
\quad\mbox{in } \mathbb{R}^n_+.
\end{equation*}
By Theorem \ref{TM_Main_TM}, we obtain the existence of $u$ by adding a suitable affine function to $\check u$.
$\hfill\Box$

%\bibliographystyle{amsplain}
%\bibliography{References}

%% \begin{thebibliography}{10}
%% \end{thebibliography}

\end{document}